\documentclass[final,onefignum,onetabnum]{siamart190516}

\usepackage{mathtools,amssymb,dsfont}
\usepackage{nicematrix}
\usepackage{algorithm,algorithmicx,algpseudocode}
\usepackage{xcolor}
\usepackage{enumitem}
\usepackage[utf8]{inputenc}
\usepackage[T1]{fontenc}
\usepackage{lmodern}

\newcommand{\QEDA}{\hfill\ensuremath{\blacksquare}}%

\newsiamremark{remark}{Remark}
\newsiamremark{hypothesis}{Hypothesis}
\crefname{hypothesis}{Hypothesis}{Hypotheses}
\newsiamremark{ex}{Example}
\newsiamthm{claim}{Claim}
\newsiamthm{cor}{Corollary}

\headers{UEC of Real-Valued Diagonalizable Linear Ensembles}{W.~Miao, G.~Cheng, and J.-S.~Li}

\title{On Uniform Ensemble Controllability of Diagonalizable Linear Ensemble Systems
}
\author{Wei~Miao\thanks{Department of Electrical and Systems Engineering, Washington University, Saint Louis, MO 63130 (\email{weimiao@wustl.edu}, \email{gong.cheng@wustl.edu}, \email{jsli@wustl.edu}).}
  \and Gong~Cheng\footnotemark[2]
  \and Jr-Shin~Li\footnotemark[2]}


\usepackage{easyReview,todonotes}
\usepackage{cite,bookmark}
\usepackage{hyphenat}

\DeclareMathOperator{\diag}{diag}
\DeclareMathOperator{\rank}{rank}

\makeatletter
\renewcommand*\env@matrix[1][\arraystretch]{%
  \edef\arraystretch{#1}%
  \hskip -\arraycolsep
  \let\@ifnextchar\new@ifnextchar
  \array{*\c@MaxMatrixCols c}}
\makeatother


\begin{document}
\maketitle

\begin{abstract} 
  In this paper, we study uniform ensemble controllability (UEC) of linear ensemble systems defined in an infinite-dimensional space through finite-dimensional settings. Specifically, with the help of the Stone-Weierstrass theorem for modules, we provide an algebraic framework for examining UEC of linear ensemble systems with diagonalizable drift vector fields through checking the controllability of finite-dimensional subsystems in the ensemble. The new framework renders a novel concept of ensemble controllability matrix, which rank-condition serves as a sufficient and necessary condition for UEC of linear ensembles. We provide several examples demonstrating that the proposed approach well-encompasses existing results and analyzes UEC of linear ensembles not addressed by literature.
\end{abstract}


%

\section{Introduction}

In recent years, the ensemble control problem, which studies the collective behavior of population systems, has drawn much attention due to its broad applicability in diverse scientific areas. Notable examples include exciting a population of nuclear spins on the order of Avogadro's number in nuclear magnetic resonance (NMR) spectroscopy and imaging \cite{glaser1998unitary,li2011optimal}, spiking population of neurons to alleviate brain disorders such as Parkinson's disease \cite{brown2004phase,li2013control,kafashan2015optimal}, manipulating a group of robots under model perturbation \cite{becker2012approximate}, and creating synchronization patterns in a network of coupled oscillators \cite{rosenblum2004controlling,Li_NatureComm16}.

The distinctive characteristic of the ensemble control problem is that the control input can be applied only on the population level. Namely, the control signal is broadcast to all systems in the ensemble. This non-standard and under-actuated scheme arises since the number of systems in practical ensembles can be exceedingly large so that applying feedback control on each system is not possible. Therefore, the analysis of the ensemble control problem is full of new challenges and is far beyond the scope of classical control theory.

During the past decade, a rich amount of work has been developed, showing that various specialized techniques such as polynomial approximation \cite{li2011ensemble}, separating points \cite{li2019separating}, representation theory \cite{chen2019structure},complex functional analysis \cite{helmke2014uniform,schonlein2016controllability,dirr2018uniform}, statistical moment-based approaches \cite{zeng2016moment,zeng2017sampled}, and convex-geometric approaches \cite{miao2020convexgeometric} are nontrivially connected to analyzing ensemble controllability and ensemble observability. Apart from investigating fundamental properties of ensemble systems, customized approaches are proposed to design feasible and optimal control laws for linear \cite{Li_ACC12_SVD,miao2020convexgeometric,Li_ACC20_Iterative_Projection,tie2017explicit}, bilinear \cite{Li_SICON17,Li_Automatica18}, as well as specific types of nonlinear ensemble systems \cite{Li_TAC13}.

Although much of the work has been done, the discussion on ensemble controllability of linear ensemble systems is not finished yet. In \cite{li2019separating}, a transparent view on UEC of  real-valued linear ensemble systems under real-valued control inputs (R-ensembles) is provided by connecting UEC of an ensemble system to controllability of each subsystem in the ensemble. A sufficient and necessary condition on UEC of R-ensembles is given based on the separating properties of the control vector field. However, \cite{li2019separating} only focuses on the R-ensembles with the eigenvalues of their drift vector field to be all real (RR-ensembles). One promising way to examine UEC of R-ensembles with complex eigenvalues in the drift vector field (RC-ensembles) is to lift the dynamics of RC-ensembles into the complex domain, and then, use the results in \cite{dirr2018uniform} to analyze UEC of the corresponding complex-valued ensemble systems. Leveraging techniques from complex functional analysis, several necessary, as well as sufficient conditions on UEC of linear ensemble systems under complex-valued control inputs (C-ensembles) are presented in \cite{dirr2018uniform}. Nevertheless, \cite{dirr2018uniform} considers mostly the linear ensembles with their drift vector fields having non-overlapping spectra. Furthermore, since realistic systems are all steered by real-valued control inputs, the conditions in \cite{dirr2018uniform} appear to be restrictive and demanding for checking UEC of practical RC-ensembles. Aside from \cite{li2019separating} and \cite{dirr2018uniform}, a thorough investigation placed directly on UEC of RC-ensembles is missing in the literature. 

The paper is organized as follows. In Section~\ref{sec: Preliminaries}, we briefly summarize our previous work in \cite{li2019separating} and provide preliminary knowledge of the ensemble control problem. In Section~\ref{sec: UEC of Linear Ensembles through Stone-Weierstrass Theorem for Modules}, leveraging Stone-Weierstrass theorem for modules, we propose a new algebraic framework to examine UEC of linear ensembles through checking controllability of subsystems in the ensemble. We specifically propose a new concept of ensemble controllability matrix, which rank-condition can be used as a sufficient and necessary condition for examining UEC of linear ensembles. The novel framework presented in this work well-encompasses the results in \cite{li2019separating}, also renders a systematic approach to check UEC of linear ensembles that are not addressed by existing literature.

\section{Preliminaries}
\label{sec: Preliminaries}

In this section, we briefly review our previous work in \cite{li2019separating} with an emphasis on the definition of linear ensemble systems and ensemble controllability. Throughout this paper, we denote $[a:b] = \{a, a+1, \ldots , b-1, b\}$ and $ [\pm a: \pm b] = [-b: -a] \cup [b: a]$ for integers $a, b$ satisfying $a<b$. We also denote $\mathbb{N} =\{0, 1, 2, \ldots \}$ as the set of natural numbers. Given two metrizable spaces $S$ and $E$, we denote $C(S, E)$ as the space of $E$-valued continuous functions over $S$.


\begin{definition}[Ensemble Controllability]
  \label{def: ensemble controllability}
  Let $M$ be a manifold, $K$ be a Hausdorff space, and $\mathcal{F}(K)$ be a space of $M$-valued functions defined over $K$. Consider an ensemble of systems parameterized by a parameter $\beta \in K$ given by
  \[
    \frac{\mathrm{d}}{\mathrm{d}t}X(t, \beta) = F(t, \beta, X(t,\beta), u(t)),
  \]
  where $F$ is a smooth vector field on $\mathcal{F}(K)$, and $X(t, \cdot)\in \mathcal{F}(K)$ is the state, which is an integral curve of $F$. This ensemble is said to be ensemble controllable on $\mathcal{F}(K)$, if for any $\epsilon>0$ and arbitrary desired target state $X_F(\cdot )\in \mathcal{F}(K)$, starting with any initial state $X_0(\cdot )\in \mathcal{F}(K)$, there exists a piecewise constant control signal $u:[0, T]\to \mathbb{R}^m$ that steers the ensemble into an $\epsilon$-neighborhood of $X_F(\cdot )$ at a finite time $T > 0$, i.e., $d(X(T, \cdot), X_F(\cdot )) < \epsilon$, where $d: \mathcal{F}(K)\times \mathcal{F}(K) \to \mathbb{R}$ is a metric on $\mathcal{F}(K)$. 
  
  In particular, if we associate the state space $\mathcal{F}(K)$ with the uniform metric, i.e., $d(f,g)=\mathrm{sup}_{\beta \in K} \rho (f(\beta), g(\beta))$ for any $f, g\in \mathcal{F}(K)$, where $\rho: M\times M\to \mathbb{R}$ is a metric on $M$, then \cref{def: ensemble controllability} is referred as the \emph{uniform ensemble controllability}.
\end{definition}

Throughout the paper, we focus on the case where $K$ is a compact subset of $\mathbb{R}$, $\mathcal{F}(K) = C(K, \mathbb{R}^{n})$ is the space of $n$-dimensional real-valued continuous functions, and $d$ is the metric induced by the sup-norm, i.e., $d(f, g) = \|f-g\|_{\infty } = \mathrm{sup}_{\beta\in K} \|f(\beta)-g(\beta)\|$, where $\|\cdot \|$ is any norm on $\mathbb{R}^n$. We specifically analyze UEC of linear ensembles indexed by $\beta \in K$, characterized by,
\begin{equation}
  \label{equ: sys}
  \frac{\mathrm{d}}{\mathrm{d}t} X(t, \beta ) = A(\beta )X(t, \beta ) + B(\beta ) u(t),
\end{equation}
where $X(t, \cdot ) \in C(K, \mathbb{R}^n)$ is the state variable; $A( \cdot ) \in C(K, \mathbb{R}^{n\times n})$ is the drift vector field with $A(\beta)$ being diagonalizable for all $\beta\in K$; $B( \cdot )\in C(K, \mathbb{R}^{n\times m})$ is the control vector field; and $u :[0, T]\to \mathbb{R}^m$ is the control signal which is piecewise constant. We assume that the eigenvalues of $A(\cdot )$ are all \emph{finite-to-one}, i.e., the inverse image of $\eta\in \lambda(K)$ with $\lambda(\beta)$ as an eigenvalue of $A(\beta)$, $\beta\in K$, always has finite cardinality. 

\Cref{fig:relations among ensembles} illustrates the relations among the four kinds of linear ensemble systems, namely, R-, C-, RR-, and RC-ensembles, that are commonly discussed in ensemble control literature. In particular, if the control signals $u(t)$ are real-valued, and furthermore, the state variable $X(t, \cdot )$, drift and control vector fields $A(\cdot )$ and $B(\cdot )$, are all real-valued, then the ensemble in \cref{equ: sys} is called an R-ensemble; otherwise, such an ensemble is called a C-ensemble. Depending on how the eigenvalues of the drift vector field take value, each R-ensemble is either an RR-ensemble or an RC-ensemble. Specifically, if the eigenvalues of $A(\beta)$ are all real for any $\beta\in K$, then such R-ensemble is further called an RR-ensemble; otherwise it is called an RC-ensemble. Each C-ensemble can be transformed into an equivalent R-ensemble (could be either RR- or RC-ensemble) by considering the real and imaginary parts of its state variable separately; while not every R-ensemble can be turned into an equivalent C-ensemble (see \cref{ex: motivating example 2}).

\begin{figure}[htbp]
  \centering
  \includegraphics[width = 0.6\linewidth]{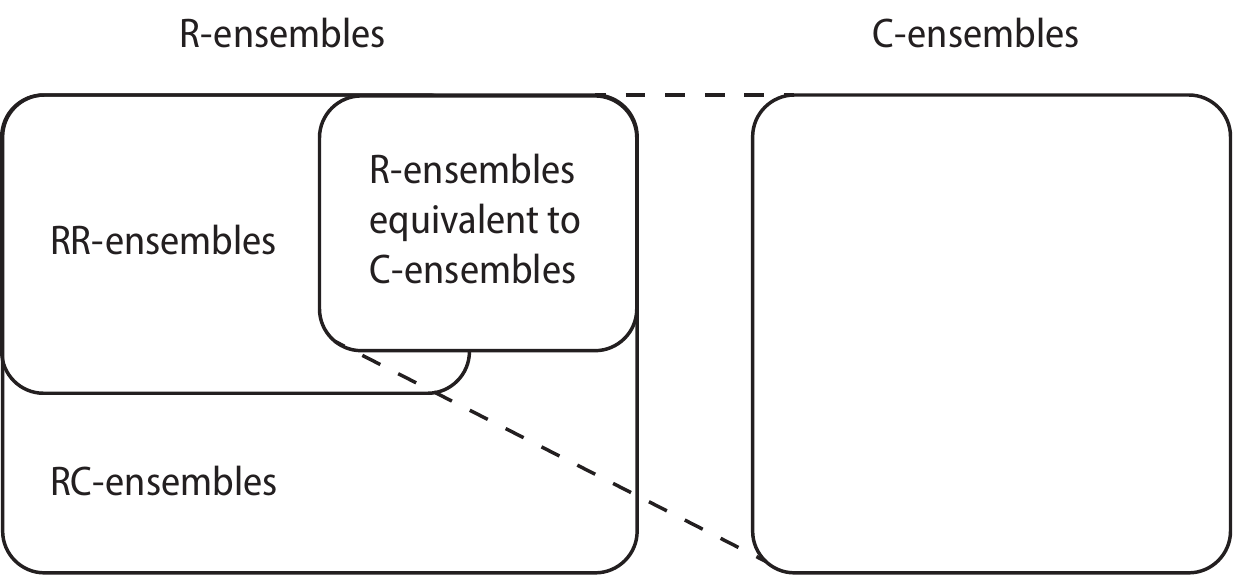}
  \caption{Relations among R-, RR-, RC-, and C-ensembles} \label{fig:relations among ensembles}
\end{figure}

In \cite{li2019separating}, we have fully characterized UEC of RR-ensembles through the lens of separating points of the control vector field, i.e., $B(\cdot )$ in \cref{equ: sys}. To illustrate the idea, let us first consider a one-dimensional RR-ensemble indexed by the parameter $\beta$ varying on a compact set $K\subset \mathbb{R}$, characterized by
\begin{equation}
  \label{equ: one-dim sysm}
  \frac{\mathrm{d}}{\mathrm{d}t} x(t, \beta) = a(\beta)x(t, \beta) + \sum_{i=1}^m b_i(\beta)u_i(t),
\end{equation}
where $x(t, \cdot)\in C(K, \mathbb{R})$, $a( \cdot ), b_i( \cdot )\in C(K, \mathbb{R})$, and $u_i:[0, T]\to \mathbb{R}$, $i\in [1:m]$, are piecewise constant control inputs. It is proved (see \cite[Corollary~1]{li2019separating}) that this ensemble is uniformly ensemble controllable if and only if its $s$-separation matrix, given by
\begin{equation}
  \label{equ: ensemble.controllability.matrix.Li.20}
  D(s) = \begin{bmatrix}
  b_1(a^{-1}_1(s)) & \cdots & b_m(a^{-1}_1(s)) \\
  \vdots & \ddots & \vdots\\
  b_1(a^{-1}_{\kappa (s)}(s)) & \cdots & b_m(a^{-1}_{\kappa (s)}(s))
  \end{bmatrix},
\end{equation}
has full row-rank, i.e., $\rank D(s) = \kappa(s)$, for all $s\in a(K)$, where $a(K)$ is the image of $K$ under the map $a: K\to \mathbb{R}$, and the set $\{a^{-1}_{1}(s), \ldots , a^{-1}_{\kappa (s)}(s)\}$ is the preimage of $s$ under the map $a$, with $\kappa(s)$ denotes its cardinality.

\begin{remark}
  Although the matrix $D(s)$ in \cref{equ: ensemble.controllability.matrix.Li.20} was called the ``ensemble controllability matrix'' for RR-ensembles in \cite{li2019separating}, we will introduce a new concept of ensemble controllability matrix for general R-ensembles in Section~\ref{sec: UEC of Linear Ensembles through Stone-Weierstrass Theorem for Modules}, which also involves information of the drift vector field $A(\cdot )$. Therefore, to avoid abuse of notations, we will refer the matrix $D(s)$ in \cref{equ: ensemble.controllability.matrix.Li.20} as the \textit{$s$-separation matrix} associated with the ensemble in \cref{equ: one-dim sysm}.
\end{remark}

Now, let us consider the case when the ensemble in \cref{equ: sys} is an $n$-dimensional RR-ensemble, i.e., the eigenvalues of $A(\beta)$, denoted as $\lambda_1(\beta), \ldots , \lambda_n(\beta)$, are real for all $\beta\in K$. Without loss of generality, we assume $A(\beta)$ is an upper-triangular matrix for all $\beta\in K$; otherwise there exists a family of invertible matrices, denoted as $T(\beta) \in C(K, \mathbb{R}^{n\times n})$, such that $T^{-1}(\beta) A(\beta) T(\beta)$ is upper-triangular for all $\beta\in K$. Then, it is proved (see Theorem 4 in \cite{li2019separating}) that the ensemble in \cref{equ: sys}, with its state variable $X(t, \cdot ) \in C(K, \mathbb{R}^n)$ defined in an infinite-dimensional space, is uniformly ensemble controllable if and only if the following system in \cref{equ: real eigenvalue induced system}, with its state variable $(Z_1, \ldots , Z_n)$ defined in a finite-dimensional Euclidean space, given by 
\begin{equation}
  \label{equ: real eigenvalue induced system}
  \frac{\mathrm{d}}{\mathrm{d}t}
  \begin{bmatrix}
    Z_1(t, s_1) \\ \vdots \\ Z_n(t, s_n)
  \end{bmatrix} =
  \begin{bmatrix}
    s_1 I_{\kappa_1(s_1)} \\ & \ddots \\ & & s_n I_{\kappa_n(s_n)}
  \end{bmatrix}
  \begin{bmatrix}
    Z_1(t, s_1) \\ \vdots \\ Z_n(t, s_n)
  \end{bmatrix} + 
  \begin{bmatrix}
    D_1(s_1) \\ \vdots \\ D_n(s_n)
  \end{bmatrix}U(t),
\end{equation}
is controllable on $\mathbb{R}^N$ for each $n$-tuple $(s_1, \ldots , s_n)\in K_1\times \cdots \times K_n$, where $K_i = \lambda_i(K)$ is the image of $K$ under $\lambda_i$; $\kappa_i(s_i)$ is the cardinality of the preimage of $s_i$ under $\lambda_i$; $N = \sum_{i=1}^n \kappa_i(s_i)$; $I_{\kappa_i(s_i)}$ is the $\kappa_i(s_i)\times \kappa_i(s_i)$ identity matrix; and $D_i(s_i) \in \mathbb{R}^{\kappa_i(s_i)\times m}$ is the $s_i$-separation matrix associated with
\[
  \frac{\mathrm{d}}{\mathrm{d}t}z_i(t, \beta) = \lambda_i(\beta)z_i(t,\beta) + b_i(\beta)U(t),
\]
with $b_i(\beta)$ denotes the $i$\textsuperscript{th} row of $B(\beta)$.

To summarize, \cite{li2019separating} provides a sufficient and necessary condition based on separating properties of the control vector field to check UEC of RR-ensemble. In this work, we will extend and upgrade the scope of \cite{li2019separating} by proposing a new algebraic framework based on Stone-Weierstrass theorem for modules to check UEC of general R-ensembles. We will provide a novel concept of ensemble controllability matrix that renders a sufficient and necessary rank-condition for UEC of R-ensembles.

\section{UEC of R-Ensembles through Stone-Weierstrass Theorem for Modules}
\label{sec: UEC of Linear Ensembles through Stone-Weierstrass Theorem for Modules}
In this section, leveraging the powerful but recondite Stone-Weierstrass theorem for modules, we propose a new framework to examine UEC of R-ensembles. We provide a sufficient and necessary condition on checking UEC through checking controllability of subsystems in the ensemble, which can be achieved by testing the rank-condition of ensemble controllability matrix. First, we introduce a set of useful definitions. In what follows, let $\mathbb{F}$ denote a field which is either $\mathbb{R}$ or $\mathbb{C}$.

\begin{definition}[Equivalence and Equivalent Class]
	Let $V$ be a locally convex vector space over $\mathbb{F}$ and $K \subset \mathbb{F}$ be compact. Given a non-empty $S\subset C(K, V)$ and $x, y\in K$, $x$ is said to be equivalent to $y$ modulo $S$, denoted as $x \equiv y \pmod S$, if $f(x) = f(y)$ for all $f\in S$. Furthermore, for every $x\in K$, the subset $[x]_{S}=\{y\in K: y\equiv x \pmod S\}\subset K$ containing $x$ is called an equivalent class modulo $S$.
\end{definition}

\begin{definition}[Module]
	Let $X$ be a Hausdorff space, $V$ be a locally convex vector space over $\mathbb{F}$, and $\mathcal{A}$ be a subalgebra of $C(X, \mathbb{F})$. A vector subspace $W \subset C(X, V)$ is called an $\mathcal{A}$-module, if for every $f\in \mathcal{A}$ and $g\in W$, the function $fg:X \to V$, $x\mapsto f(x)g(x)$ satisfies $fg\in W$.
\end{definition}

\begin{definition}[Localizability]
  Let $V$ be a locally convex vector space over a field $\mathbb{F}$, $K \subset \mathbb{F}$ be compact, $\mathcal{A}$ be a subalgebra of $C(K, \mathbb{F})$, and $W\subset C(K, V)$ be an $\mathcal{A}$-module. Then, we say a function $f\in C(K, V)$ is localizable by $W$ under $\mathcal{A}$ if for any $\epsilon>0$ and every equivalent class $E \subset K$ modulo $\mathcal{A}$, there exists $g\in W$ such that $\sup_{x\in E}\Vert f(x)-g(x)\Vert <\epsilon$.
\end{definition}


\begin{theorem}[Stone-Weierstrass Theorem for Real-Valued Modules] \label{thm: Stone-Weierstrass for modules}
	Let $X$ be a Hausdorff space, $V$ be a locally convex vector space over $\mathbb{R}$, $\mathcal{A}$ be a subalgebra of $C(K, \mathbb{R})$, and $W \subset C(X, V)$ be a vector subspace which is an $\mathcal{A}$-module. Then, for any $f\in C(X, V)$, it holds that $f\in \bar{W}$ if and only if $f$ is localizable by $W$ under $\mathcal{A}$, where $\bar{W}$ is the closure of $W$.
\end{theorem}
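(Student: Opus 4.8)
The plan is to prove the nontrivial implication --- that localizability forces $f\in\overline{W}$ --- by a partition-of-unity argument that glues local approximants drawn from $W$ using scalar multipliers drawn from the algebra $\mathcal{A}$; the reverse implication is essentially definitional. Throughout I read the closure $\overline{W}$ in the topology of uniform convergence and, as in the paper's intended application, treat $X$ as compact (on a general Hausdorff $X$ one passes to a bounded or strict topology, but the compact case already covers the later setting $X=K$). For the easy direction, if $f\in\overline{W}$ then for any $\epsilon>0$ there is $g\in W$ with $\sup_{x\in X}\Vert f(x)-g(x)\Vert<\epsilon$; restricting the supremum to any single equivalence class $E$ modulo $\mathcal{A}$ immediately yields the approximation required by localizability.

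For the hard direction, fix $f$ localizable and $\epsilon>0$. First, for each $x\in X$ I would use localizability on the class $[x]_{\mathcal{A}}$ to obtain $g_x\in W$ with $\sup_{y\in[x]_{\mathcal{A}}}\Vert f(y)-g_x(y)\Vert<\epsilon/2$. By continuity of $f$ and $g_x$ the set $\{y:\Vert f(y)-g_x(y)\Vert<\epsilon/2\}$ is open and contains $[x]_{\mathcal{A}}$; using that the equivalence classes are exactly the fibers of the quotient map $\pi\colon X\to X/\mathcal{A}$ onto the space of classes modulo $\mathcal{A}$ --- which is closed because $X$ is compact --- I would shrink this open set to a \emph{saturated} open neighborhood $V_x$ of $[x]_{\mathcal{A}}$ on which the same $\epsilon/2$ bound holds. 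Compactness then gives a finite subcover $V_{x_1},\dots,V_{x_k}$. The decisive step is to manufacture a partition of unity subordinate to this cover that is compatible with $\mathcal{A}$: since $X/\mathcal{A}$ is compact Hausdorff, hence normal, there is a partition of unity $\{\psi_i\}$ subordinate to the open cover $\{\pi(V_{x_i})\}$, and the pullbacks $\phi_i=\psi_i\circ\pi$ are continuous, nonnegative, sum to $1$, satisfy $\operatorname{supp}\phi_i\subseteq V_{x_i}$, and are constant on every equivalence class. The classical scalar Stone--Weierstrass theorem identifies $\overline{\mathcal{A}}$ with (essentially) the continuous functions that are constant on the classes, so each $\phi_i$ lies in $\overline{\mathcal{A}}$ and is a uniform limit of elements of $\mathcal{A}$.

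I would then reassemble the global approximant as $g=\sum_{i=1}^k\phi_i\,g_{x_i}$. Because $W$ is an $\mathcal{A}$-module, $a\,g_{x_i}\in W$ for every $a\in\mathcal{A}$, and since each $g_{x_i}$ is bounded on the compact $X$, the estimate $\Vert a\,g_{x_i}-\phi_i g_{x_i}\Vert_\infty\le\Vert a-\phi_i\Vert_\infty\,\sup_{y}\Vert g_{x_i}(y)\Vert$ shows $\phi_i g_{x_i}\in\overline{W}$, whence $g\in\overline{W}$. Finally, convexity of the seminorm together with $\sum_i\phi_i=1$ and $\operatorname{supp}\phi_i\subseteq V_{x_i}$ gives, pointwise, $\Vert f(y)-g(y)\Vert\le\sum_i\phi_i(y)\Vert f(y)-g_{x_i}(y)\Vert<\epsilon/2$, so $\sup_{x\in X}\Vert f-g\Vert<\epsilon$; as $\overline{W}$ is closed and $\epsilon>0$ was arbitrary, $f\in\overline{W}$.

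I expect the main obstacle to lie in the partition-of-unity construction and, more precisely, in the interface between the algebra and the module. Three points need care: (i) realizing the class-constant bump functions $\phi_i$ as limits of genuine elements of $\mathcal{A}$, which forces one to invoke scalar Stone--Weierstrass and to treat the non-unital case (when $\mathcal{A}$ omits constants) separately, accounting for any class on which every element of $\mathcal{A}$ vanishes; (ii) producing the saturated neighborhoods $V_x$, which relies on compactness of $X$ so that the quotient by the closed equivalence relation is compact Hausdorff and $\pi$ is a closed map; and (iii) pushing $\phi_i g_{x_i}$ back into $\overline{W}$, which uses boundedness of the $g_{x_i}$ together with the module structure. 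If $V$ is a general locally convex space rather than a normed space, the whole argument should be carried out seminorm-by-seminorm, replacing $\Vert\cdot\Vert$ with each continuous seminorm on $V$.
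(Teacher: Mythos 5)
The paper does not prove this theorem itself; it simply cites Prolla (pp.~15--17), and what you have written is essentially a reconstruction of that standard localization argument, so your route and the cited source's coincide. Your reasoning is sound in the compact case (the only case the paper actually uses, and the only one in which the sup-metric on $C(X,V)$ is unproblematic): the easy direction by restriction, the saturated shrinking of the neighborhoods via closedness of the quotient map on a compact Hausdorff space, the partition of unity pulled back from the compact Hausdorff quotient $X/\mathcal{A}$, the convexity estimate $\Vert f-\sum_i\phi_i g_{x_i}\Vert\le\sum_i\phi_i\Vert f-g_{x_i}\Vert<\epsilon/2$, and the passage from $a\,g_{x_i}\in W$ to $\phi_i g_{x_i}\in\overline{W}$ via boundedness of $g_{x_i}$ are all correct. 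The one point you flag but leave open --- realizing the class-constant bumps $\phi_i$ as uniform limits of elements of $\mathcal{A}$ when $\mathcal{A}$ is non-unital --- does need to be settled, since in the paper's application the algebra $\mathcal{C}=\mathrm{alg}_{\mathbb{R}}\{\mu_1,\mu_2\}$ need not contain constants; fortunately it has a one-line fix. Because $W$ is a vector subspace, it is automatically a module over the unital algebra $\mathcal{A}+\mathbb{R}\mathds{1}$, and adjoining constants changes neither the equivalence classes modulo the algebra nor the localizability condition, so you may assume $\mathds{1}\in\mathcal{A}$ from the outset; the induced algebra on $X/\mathcal{A}$ then separates points and contains constants, and the scalar Stone--Weierstrass theorem gives $\phi_i\in\overline{\mathcal{A}}$ with no exceptional ``vanishing class'' to treat. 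With that amendment (and running the estimates seminorm-by-seminorm, taking finite maxima of seminorms to get a single approximant, as you note), your proof is complete for compact $X$.
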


\begin{proof}
	See \cite[pp.~15-17]{Prolla1993WeierstrassStoneTT}.
\end{proof}

\begin{remark}
  \Cref{thm: Stone-Weierstrass for modules} is a generalization of the commonly known Stone\hyp{}Weierstrass theorem for real-valued functions. Specifically, if $\mathcal{A}$ is a subalgebra that separates points (i.e., for any $x, y\in K$ satisfying $x\neq y$, there exists $f\in \mathcal{A}$ such that $f(x)\neq f(y)$), then each equivalent class of $K$ modulo $\mathcal{A}$ is a singleton so that a function $f\in C(X, V)$ is localizable by $W$ under $\mathcal{A}$ if $W$ contains the constant function $\mathds{1}(\cdot)$. If it further holds that $V = \mathbb{R}$ and $W = \mathcal{A}$, then \cref{thm: Stone-Weierstrass for modules} boils down to the Stone-Weierstrass theorem for real-valued functions which asserts that a subalgebra of continuous real-valued functions that separates points and contains constant functions is dense in $C(X, \mathbb{R})$.
\end{remark}

Next, we extend \cref{thm: Stone-Weierstrass for modules} to the complex-valued cases.

\begin{lemma}
  \label{lem: complex module to real module}
	Let $X$ be a Hausdorff space, and $\mathcal{A}$ be a self-adjoint subalgebra of $C(X, \mathbb{C})$, i.e., $\mathcal{A}$ is closed under complex conjugate. Suppose $V$ is a locally convex space over $\mathbb{C}$, then $W \subset C(X, V)$ is an $\mathcal{A}$-module if and only if $W$ is a $\mathcal{B}$-module, where $\mathcal{B} := \{\Re(a): a\in \mathcal{A}\}$.
\end{lemma}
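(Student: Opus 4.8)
The plan is to prove the two implications of the biconditional separately, each reducing to an elementary relationship between $\mathcal{A}$ and $\mathcal{B}$. First I would record two structural facts. The self-adjointness of $\mathcal{A}$ forces $\mathcal{B}\subseteq\mathcal{A}$: for any $a\in\mathcal{A}$ one has $\Re(a)=\tfrac12(a+\bar a)$, and since $\bar a\in\mathcal{A}$ by self-adjointness and $\mathcal{A}$ is a (complex) subalgebra, $\Re(a)\in\mathcal{A}$. Conversely, every element of $\mathcal{A}$ is built from $\mathcal{B}$ through its real and imaginary parts: writing $a=\Re(a)+i\,\Im(a)$, the term $\Re(a)$ lies in $\mathcal{B}$ by definition, while $\Im(a)=\Re(-ia)$ with $-ia\in\mathcal{A}$, so $\Im(a)\in\mathcal{B}$ as well. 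These two observations are essentially the whole content of the lemma; the remainder is careful bookkeeping of the vector-space structure of $W$.

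For the direction ``$\mathcal{A}$-module $\Rightarrow$ $\mathcal{B}$-module'' I would take $f\in\mathcal{B}$ and $g\in W$. By the first fact $f\in\mathcal{A}$, so $fg\in W$ directly from the $\mathcal{A}$-module property; since $W$ is in particular a real vector subspace, this is all that the $\mathcal{B}$-module definition requires. For the reverse direction ``$\mathcal{B}$-module $\Rightarrow$ $\mathcal{A}$-module'' I would take $a\in\mathcal{A}$ and $g\in W$ and use the decomposition $ag=\Re(a)\,g+i\bigl(\Im(a)\,g\bigr)$. By the second fact both $\Re(a)$ and $\Im(a)$ belong to $\mathcal{B}$, so $\Re(a)\,g\in W$ and $\Im(a)\,g\in W$ by the $\mathcal{B}$-module hypothesis; because $W$ is a complex vector subspace of $C(X,V)$ it is closed under multiplication by $i$ and under addition, whence $ag\in W$, i.e. $W$ is an $\mathcal{A}$-module.

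The step I expect to require the most care is keeping straight which scalar field's subspace structure is being used, since $W$ carries both a real and a complex structure while the two module notions are phrased over $\mathbb{R}$ and $\mathbb{C}$ respectively. In particular the reverse implication genuinely needs $W$ to be closed under multiplication by $i$ — a merely real subspace that is a $\mathcal{B}$-module need not be an $\mathcal{A}$-module — so I would state at the outset that $W$ is taken to be a complex vector subspace of $C(X,V)$. Self-adjointness, by contrast, is the hypothesis doing the essential work in the forward implication: without it one has $\mathcal{B}\not\subseteq\mathcal{A}$ in general, and an $\mathcal{A}$-module such as $W=\mathcal{A}$ itself would fail to be closed under multiplication by real parts like $\Im(a)$. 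I would therefore flag that assumption explicitly where it is invoked.
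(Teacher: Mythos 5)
Your proposal is correct and follows essentially the same route as the paper: both directions rest on the two facts that $\Re(a)=\tfrac12(a+\bar a)\in\mathcal{A}$ (so $\mathcal{B}\subseteq\mathcal{A}$, giving the forward implication) and that $\Im(a)=\Re(-ia)\in\mathcal{B}$ (so $ag=\Re(a)g+i\,\Im(a)g\in W$, giving the reverse). The only item the paper includes that you omit is the quick check that $\mathcal{B}$ is itself a subalgebra of $C(X,\mathbb{R})$, which is needed for the phrase ``$\mathcal{B}$-module'' to be well-posed but is immediate from $\mathcal{B}\subseteq\mathcal{A}$.
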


\begin{proof}
	(Sufficiency): It suffices to show that $\mathcal{B} \subset \mathcal{A}$ and $\mathcal{B}$ is a subalgebra of $C(X, \mathbb{R})$. It is evident that $\mathcal{B} \subset \mathcal{A}$ since from $\mathcal{A}$ being self-adjoint, we have $\Re(a) = \frac{1}{2}(a+ \bar{a}) \in \mathcal{A}$ for any $a\in \mathcal{A}$. To prove $\mathcal{B}$ is a subalgebra of $C(X, \mathbb{R})$, we observe that for any $a_1,a_2 \in \mathcal{A}$, we have $\Re(a_1), \Re(a_2) \in \mathcal{A}$ so that $\Re(a_1)\Re(a_2)\in \mathcal{A}$. Then, it holds that $\Re(a_1)\Re(a_2) = \Re{}[\Re(a_1)\Re(a_2)] \in \mathcal{B}$, which implies $\mathcal{B}$ is a subalgebra of $C(X, \mathbb{R})$.

	(Necessity): Let $a\in \mathcal{A}$ and write $a = u+iv$. By the definition of $\mathcal{B}$, we have $u = \Re(a) \in \mathcal{B}$. Furthermore, since $v = \Re(-ia)$, we have $v\in \mathcal{B}$. Now, if $W$ is a $\mathcal{B}$-module, for any $f\in W$ we have $af = uf + ivf\in W$, which implies that $W$ is an $\mathcal{A}$-module.
\end{proof}

\begin{cor}
  \label{cor: Stone-Weierstrass for complex modules}
	Let $X$ be a Hausdorff space, $V$ be a locally convex vector space over the field $\mathbb{C}$, $\mathcal{A}$ be a self-adjoint subalgebra of $C(K, \mathbb{C})$, and $W \subset C(X, V)$ be a vector subspace which is an $\mathcal{A}$-module. Let $\mathcal{B}:=\Re(\mathcal{A})=\{\Re(f):f\in\mathcal{A}\}$. Then, for each $f\in C(X, V)$, it holds that $f\in \bar{W}$ if and only if $f$ is localizable by $W$ under $\mathcal{B}$, where $\bar{W}$ is the closure of $W$.
\end{cor}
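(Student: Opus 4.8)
The plan is to deduce this corollary directly from the real-valued Stone-Weierstrass theorem for modules (\cref{thm: Stone-Weierstrass for modules}) by passing from the complex scalar field to the real one. The key observation is that every locally convex space $V$ over $\mathbb{C}$ is, by restriction of scalars, also a locally convex space over $\mathbb{R}$: its topology, and hence the seminorms entering the norm $\Vert\cdot\Vert$ used in the definition of localizability, are unchanged. Consequently $C(X,V)$ carries the same topology whether we regard $V$ as complex or real, the closure $\bar{W}$ is the same set in both views, and the complex vector subspace $W$ is in particular a real vector subspace of $C(X,V)$. Thus the two notions appearing in the asserted equivalence, namely membership in $\bar{W}$ and localizability, are insensitive to this change of scalars.

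First I would invoke \cref{lem: complex module to real module}. Since $\mathcal{A}$ is self-adjoint, that lemma shows that $W$ being an $\mathcal{A}$-module is equivalent to $W$ being a $\mathcal{B}$-module, where $\mathcal{B}=\Re(\mathcal{A})$; its proof moreover establishes that $\mathcal{B}$ is a subalgebra of $C(X,\mathbb{R})$. Under the real-scalar viewpoint we therefore have precisely the ingredients required by \cref{thm: Stone-Weierstrass for modules}: a Hausdorff space $X$, a real locally convex space $V$, a subalgebra $\mathcal{B}\subset C(X,\mathbb{R})$, and a real vector subspace $W\subset C(X,V)$ that is a $\mathcal{B}$-module. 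Applying \cref{thm: Stone-Weierstrass for modules} with $\mathcal{B}$ playing the role of the real subalgebra then yields, for every $f\in C(X,V)$, that $f\in\bar{W}$ if and only if $f$ is localizable by $W$ under $\mathcal{B}$, which is exactly the claimed statement. Note that the equivalence classes entering localizability are taken modulo $\mathcal{B}$ in both the theorem and the corollary, so no translation between equivalence classes modulo $\mathcal{A}$ and modulo $\mathcal{B}$ is required; one may nonetheless observe that they coincide, since $\mathcal{B}\subset\mathcal{A}$ while every $a\in\mathcal{A}$ is determined by $\Re(a)$ and $\Re(-ia)$, both of which lie in $\mathcal{B}$.

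The only genuine content lies in the bookkeeping of the first paragraph, so I expect the main (and essentially the only) obstacle to be verifying that realification leaves local convexity, the value of the closure $\bar{W}$, and the notion of localizability truly invariant, together with confirming that $\mathcal{B}$ is a bona fide subalgebra. The last point is already handled inside the proof of \cref{lem: complex module to real module}, and all of the substantive approximation-theoretic work is packaged in \cref{thm: Stone-Weierstrass for modules}; hence no further analytic estimate is needed and the argument reduces to assembling these two results.
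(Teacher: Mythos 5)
Your proposal is correct and follows exactly the route the paper takes: the paper's own proof is the one-line statement that the corollary is evident from \cref{thm: Stone-Weierstrass for modules} together with \cref{lem: complex module to real module}. Your write-up simply makes explicit the (routine but worth recording) verifications that the paper leaves implicit, namely that restriction of scalars preserves local convexity, the closure $\bar{W}$, and the notion of localizability, and that $\mathcal{B}$ is a real subalgebra for which $W$ is a module.
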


\begin{proof}
	The proof is evident by \cref{thm: Stone-Weierstrass for modules} and \cref{lem: complex module to real module}.
\end{proof}

In the remaining of this section, we demonstrate that Stone-Weierstrass theorem for modules enables an elegant and systematic approach to examine UEC of R-ensembles through checking controllability of their subsystems. The key idea is to rewrite the reachable set of an R-ensemble into a module. Then, if the reachable set is rich enough so that any continuous function over $K$ is localizable by the corresponding module, we shall conclude that the R-ensemble is uniformly ensemble controllable.

To fix ideas, let us consider the R-ensemble indexed by $\beta\in K$, given by
\begin{equation}
  \label{equ: diagonal.linear.ensemble.general.eigenvalue}
  \begin{aligned}
    \frac{\mathrm{d}}{\mathrm{d}t} X(t,\beta) &= A(\beta)X(t,\beta)+B(\beta)u(t) \\
    &= \diag\{\lambda_1(\beta), \dots, \lambda_{r}(\beta), G_1(\beta), \dots, G_{l}(\beta)\}X(t, \beta)+\sum_{j=1}^m b_j(\beta)u_j(t),
  \end{aligned}
\end{equation}
where $G_j = \begin{bsmallmatrix} \alpha_j(\beta) & -\omega_j(\beta) \\ \omega_j(\beta) & \alpha_j(\beta) \end{bsmallmatrix}$, $j\in [1:l]$, and $u_j:[0, T]\to \mathbb{R}$ are piecewise constant control signals for $j \in [1:m]$. In this case, the drift vector field $A(\beta)$ in \cref{equ: diagonal.linear.ensemble.general.eigenvalue} is said to be in its ``real Jordan form''. It is proved in \cite[pp.~202]{horn2012matrix} that for every real-valued $A$ that is diagonalizable in $\mathbb{C}$, there exists an invertible matrix $T\in \mathbb{R}^{n\times n}$ such that $T^{-1}AT$ is in the real Jordan form. Therefore, the UEC analysis of the ensemble in \cref{equ: diagonal.linear.ensemble.general.eigenvalue} in this section can be generalized to R-ensembles with diagonalizable drift vector fields.

For ease of exposition, we denote
\[
  \mathcal{K} = \bigcup_{j=1}^r \lambda_j(K) \bigcup_{j=1}^l (\alpha_j+i\omega_j)(K) \bigcup_{j=1}^l (\alpha_j - i\omega_j)(K).
\]
Given any $\eta\in \mathcal{K}$, we denote
\[
  \gamma^{-1}(\eta) = \bigcup_{j=1}^r \{\lambda_j^{-1}(\eta)\} \bigcup_{j=1}^l \{(\alpha_j+i\omega_j)^{-1}(\eta)\}\bigcup_{j=1}^l \{(\alpha_j-i\omega_j)^{-1}(\eta)\},
\]
where $\lambda_j^{-1}(\eta)$, $(\alpha_j+i\omega_j)^{-1}(\eta)$, and $(\alpha_j-i\omega_j)^{-1}(\eta)$ are the preimages of $\eta$ under the map $\lambda_j$, $\alpha_j + i\omega_j$, and $\alpha_j - i\omega_j$, respectively. We denote the cardinality of $\gamma^{-1}(\eta)$ as $\kappa(\eta)$. Given a matrix $M\in \mathbb{R}^{n\times m}$ and $S = \{\beta_1, \ldots , \beta_p\}\subset K$, we denote $\mathcal{D}_M(S) \in \mathbb{R}^{pn \times pm}$ and $\mathcal{V}_M(S) \in \mathbb{R}^{pn \times m}$ as
\[
  \mathcal{D}_M(S) =
  \begin{bmatrix}
    M(\beta_1) \\ & \ddots \\ & & M(\beta_p)
  \end{bmatrix} \quad \text{and} \quad \mathcal{V}_M(S) =
  \begin{bmatrix}
    M(\beta_1) \\ \vdots \\ M(\beta_p)
  \end{bmatrix},
\]
respectively.

\begin{theorem}
  \label{thm: UEC.general.complex.module.self.adjoint}
	Suppose the reachable set of the ensemble in \cref{equ: diagonal.linear.ensemble.general.eigenvalue}, characterized by
  \[
    \mathcal{L} = \mathrm{span}_{\mathbb{R}}\{A^kb_j: j \in [1:m], k\in \mathbb{N}\},
  \]
  is closed under left multiplication of $A^\ast$, where
  \begin{equation}
    \label{equ: UEC.general.complex.module.self.adjoint.varphi}
		A^{\ast} = \begin{bmatrix} \lambda_1(\cdot ) \\ & \ddots \\ & & \lambda_r(\cdot ) \\ & & & G^{\ast}_1(\cdot ) \\ & & & & \ddots \\ & & & & &  G^{\ast}_l(\cdot )
		\end{bmatrix},
  \end{equation}
  i.e., for any $f(\cdot)\in\mathcal{L}$, $A^{\ast}(\cdot)f(\cdot)\in\mathcal{L}$, then the ensemble in \cref{equ: diagonal.linear.ensemble.general.eigenvalue} is uniformly ensemble controllable on $C(K, \mathbb{R}^{n})$ if and only if for any $\eta\in \mathcal{K}$, the system given by
	\begin{equation}
    \label{equ: subsys.general.complex.module.self.adjoint}
		\frac{\mathrm{d}}{\mathrm{d}t} Y(t, \eta) = \mathcal{D}_A (\gamma^{-1}(\eta)) Y(t, \eta) + \mathcal{V}_B (\gamma^{-1}(\eta))u(t) 
	\end{equation}
	is controllable on $\mathbb{R}^{n\kappa (\eta)}$.
  where $n = r+2l$ and $G_j^{\ast}(\beta) = \begin{bsmallmatrix} \alpha_j(\beta) & \omega_j(\beta) \\ -\omega_j(\beta) & \alpha_j(\beta) \end{bsmallmatrix}$, $j \in [1:l]$ for all $\beta\in K$. 
\end{theorem}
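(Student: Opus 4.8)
The plan is to prove the theorem in three movements: recast uniform ensemble controllability as a density problem for the reachable set $\mathcal{L}$; endow $\mathcal{L}$ with a module structure so that \cref{cor: Stone-Weierstrass for complex modules} applies; and translate the resulting localizability criterion into the finite-dimensional controllability of \cref{equ: subsys.general.complex.module.self.adjoint}. First I would show that the ensemble in \cref{equ: diagonal.linear.ensemble.general.eigenvalue} is UEC on $C(K,\mathbb{R}^n)$ if and only if $\overline{\mathcal{L}} = C(K,\mathbb{R}^n)$. Writing the solution as $X(T,\beta) = e^{A(\beta)T}X_0(\beta) + \int_0^T e^{A(\beta)(T-\tau)}B(\beta)u(\tau)\,d\tau$ and expanding the exponential, the functions reachable with piecewise-constant controls are exactly the finite $\mathbb{R}$-linear combinations of the profiles $\beta\mapsto A(\beta)^k b_j(\beta)$ (the moments of a piecewise-constant control against $(T-\tau)^k$ being freely assignable), whose closure is $\overline{\mathcal{L}}$; by linearity, approximating an arbitrary target from an arbitrary initial state is then equivalent to $\overline{\mathcal{L}}$ exhausting the state space, mirroring the reduction used for RR-ensembles in \cite{li2019separating}.

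Next I would install the module structure. Since $A^{k+1}b_j = A\,(A^k b_j)$, the set $\mathcal{L}$ is automatically closed under left multiplication by $A$, and the standing hypothesis adds closure under $A^\ast$. Because $A(\beta)$ is in real Jordan form, $A$ and $A^\ast$ commute and are simultaneously diagonalizable over $\mathbb{C}$: on each real block they act as the scalar $\lambda_j(\beta)$, while on each $G_j$-block the pair $(G_j,G_j^\ast)$ generates the commutative algebra $\{pI+qJ\}\cong\mathbb{C}$ with $G_j\leftrightarrow \mu_j:=\alpha_j+i\omega_j$ and $G_j^\ast\leftrightarrow\bar\mu_j$. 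Complexifying and passing to the eigenbasis therefore turns left multiplication by $A$ and $A^\ast$ into pointwise multiplication by the scalar eigenvalue functions and their conjugates, so $\mathcal{L}$ becomes a module over the self-adjoint subalgebra $\mathcal{A}\subset C(K,\mathbb{C})$ generated by $\{\lambda_j\}\cup\{\mu_j\}\cup\{\bar\mu_j\}$. The crucial point is that closure under $A^\ast$ is exactly what supplies the conjugates $\bar\mu_j$ and renders $\mathcal{A}$ self-adjoint, which is precisely what licenses \cref{cor: Stone-Weierstrass for complex modules}.

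With the module in hand I would invoke \cref{cor: Stone-Weierstrass for complex modules} with $\mathcal{B}=\Re(\mathcal{A})$ to conclude that $\overline{\mathcal{L}}=C(K,\mathbb{R}^n)$ if and only if every $f\in C(K,\mathbb{R}^n)$ is localizable by $\mathcal{L}$ under $\mathcal{B}$. Read in the eigenbasis, two indices are equivalent modulo $\mathcal{A}$ exactly when they carry the same eigenvalue value, so the equivalence classes are precisely the fibers over $\eta\in\mathcal{K}$, each identified with the finite set $\gamma^{-1}(\eta)$ (finiteness, hence $\kappa(\eta)<\infty$, coming from the finite-to-one hypothesis). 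Localizability over such a finite class means that the evaluations of $\mathcal{L}$ on $\gamma^{-1}(\eta)=\{\beta_1,\ldots,\beta_{\kappa(\eta)}\}$ fill the whole fiber, so the remaining task is the finite-dimensional identity that $\mathrm{span}\{(A(\beta_1)^k b_j(\beta_1),\ldots,A(\beta_{\kappa(\eta)})^k b_j(\beta_{\kappa(\eta)})):k\in\mathbb{N},\,j\in[1:m]\}$ equals the Kalman controllable subspace of \cref{equ: subsys.general.complex.module.self.adjoint}; thus localizability at $\eta$ is equivalent to controllability of the stacked subsystem on $\mathbb{R}^{n\kappa(\eta)}$, giving both directions of the claimed equivalence as $\eta$ ranges over $\mathcal{K}$.

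I expect the main obstacle to be the rigorous reconciliation in the second movement: showing that the matrix-valued action of $A$ and $A^\ast$ genuinely descends to a scalar $\mathcal{A}$-module structure to which the Stone–Weierstrass machinery applies, while correctly bookkeeping the reality constraint on the $\lambda_j$-blocks and the conjugate pairing $\mu_j\leftrightarrow\bar\mu_j$ on the $G_j$-blocks. A secondary difficulty arises in the last step: because $\Lambda\equiv\eta$ on $\gamma^{-1}(\eta)$, one must verify that stacking the full $n\times n$ blocks $A(\beta_i)$ rather than the scalar $\eta$ loses no information, so that the coupling among distinct parameters sharing the eigenvalue $\eta$ — appearing as a repeated eigenvalue of $\mathcal{D}_A(\gamma^{-1}(\eta))$ — is detected through the Popov–Belevitch–Hautus test, which is what reproduces the separation-type rank requirement and closes the finite-dimensional equivalence.
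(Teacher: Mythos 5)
Your proposal is correct and follows essentially the same route as the paper: reduce UEC to density of the reachable set $\mathcal{L}$, complexify $A$ into its diagonal form so that closure under $A^{\ast}$ yields a module over a self-adjoint algebra generated by the eigenvalue functions and their conjugates, invoke \cref{cor: Stone-Weierstrass for complex modules}, and identify the equivalence classes with the fibers $\gamma^{-1}(\eta)$ so that localizability on each finite fiber becomes exactly controllability of \cref{equ: subsys.general.complex.module.self.adjoint}. The one place the paper is more careful than your sketch is the point you yourself flag as the main obstacle: the scalar algebra cannot sit in $C(K,\mathbb{C})$ acting uniformly on all components, since each component is multiplied by its own eigenvalue branch, so the paper flattens the vector-valued functions onto the product space $K\times[1:r+2l]$ via the map $\psi$ and takes the algebra generated by the single function $\xi$ and its conjugate there, after which the equivalence-class computation proceeds as you describe.
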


\begin{proof}
	(Sufficiency):
  Let $P = \begin{bsmallmatrix} I_r \\ & I_{l}\otimes T	\end{bsmallmatrix}$, where $T = \tfrac{1}{\sqrt{2}} \begin{bsmallmatrix} 1 & 1 \\ i & -i \end{bsmallmatrix}$ and $\otimes$ denotes the Kronecker product of two matrices. We can verify that
  \[
    \Lambda:= P^{-1}AP =
    \begin{bNiceMatrix}
      \lambda_1 \\ & \ddots \\ & & \lambda_r \\ & & &  \alpha_1 - i\omega_1 \\ & & & & \alpha_1 + i\omega_1 \\ & & & & & \ddots \\ & & & & & &  \alpha_l - i\omega_l \\ & & & & & & &  \alpha_l + i\omega_l
	  \end{bNiceMatrix}.
  \]
	Hence, the reachable set of the ensemble in \cref{equ: diagonal.linear.ensemble.general.eigenvalue}, i.e., $\mathcal{L}$, is isomorphic to
  \[
    \tilde{\mathcal{L}} = \mathrm{span}_{\mathbb{R}}\{P^{-1}A^k b_j: j \in [1:m], k\in \mathbb{N}\} = \mathrm{span}_{\mathbb{R}} \{\Lambda^k \tilde{b}_j: j\in [1:m], k \in \mathbb{N}\},
  \]
  with $\tilde{b}_j = P^{-1}b_j$.

	We first prove that if $\mathcal{L}$ is 
  closed under left multiplication of $A^{\ast}$, then $\tilde{\mathcal{L}}$ is closed under left multiplication of $\bar{\Lambda}$, where $\bar{\Lambda}$ denotes the complex-conjugate of $\Lambda$. Note that $\bar{A}=A$, and since it is straightforward to verify that $P^{-1}A^{\ast}P = \bar{P}^{-1}A\bar{P}$, we have
  \[
    \begin{aligned}
      \bar{\Lambda}\Lambda^k \tilde{b}_j &= \overline{P^{-1} A P} (P^{-1}A P)^k P^{-1}b_j = \bar{P}^{-1} A \bar{P} P^{-1} A^k b_j \\
      &= P^{-1}A^{\ast}P P^{-1} A^k b_j = P^{-1}A^{\ast}A^kb_j,
    \end{aligned}
  \]
  for any $\Lambda^{k}\tilde{b}_j\in\tilde{\mathcal{L}}$. If $\mathcal{L}$ is closed under left multiplication of $A^{\ast}$, then $A^{\ast}A^k b_j$ is in the form of a finite linear combination, i.e., $A^{\ast}A^kb_j=\sum_{s}c_sA^{k_s}b_{j_s}$, with $k_{s}\in\mathbb{N}$ and $j_s\in[1:m]$.
  Therefore, $\bar{\Lambda}\Lambda^k \tilde{b}_j = P^{-1}A^{\ast}A^kb_j = \sum_{s}c_sP^{-1}A^{k_s}b_{j_s} \in \tilde{\mathcal{L}}$. 
  
  Next, let us consider the map $\psi: \tilde{\mathcal{L}} \to C(K\times I, \mathbb{C})$ with $I:=[1:r+2l]$, which is defined as follows: for any $f\in\tilde{\mathcal{L}}$,
  \[
    (\psi f)(\beta,s) = \langle f \rangle_{s}(\beta),
  \]
	where $\beta\in K$ and $\langle f \rangle_{s}$ denotes the $s$\textsuperscript{th} component of $f$ for $s\in I$. Obviously, $\psi$ is a one-to-one bounded linear map, so its image $W:= \psi(\tilde{\mathcal{L}})$ is a vector subspace of $C(K\times I, \mathbb{C})$ that is isomorphic to $\tilde{\mathcal{L}}$. For any $f = \Lambda^k \tilde{b}_j\in \tilde{\mathcal{L}}$, we have
  \[ 
    (\psi f)(\beta,s)=
    \begin{dcases}
      \lambda_s^k \langle\tilde{b}_j\rangle_{s}(\beta), & \text{if } s \in [1:r], \\
		  (\alpha_{q}-i\omega_{q})^k \langle\tilde{b}_j\rangle_{s}(\beta), & \text{if } s=r-1+2q \in [r+1:2:r+2l-1], \\
		  (\alpha_{q}+i\omega_{q})^k \langle\tilde{b}_j\rangle_{s}(\beta), & \text{if } s=r+2q \in [r+2:2:r+2l].
    \end{dcases} 
  \]
	Let $\mathcal{B}=\mathrm{alg}_{\mathbb{R}} \{\xi , \bar{\xi}\} \subset C(K\times I, \mathbb{C})$ denote the \emph{real} algebra generated by $\xi$ and $\bar{\xi}$, where $\xi$ is defined as
  \[
    \xi(\beta,s)=
    \begin{dcases}
      \lambda_s(\beta), & \text{if } s \in [1:r], \\
		  (\alpha_{q} - i\omega_{q})(\beta), & \text{if } s=r-1+2q \in [r+1:2:r+2l-1], \\
		  (\alpha_{q} + i\omega_{q})(\beta), & \text{if } s=r+2q \in [r+2:2:r+2l].
    \end{dcases}
  \]
	It is straightforward that $\mathcal{B}$ is a self-adjoint subalgebra of $C(K\times I, \mathbb{C})$. Moreover, we observe that the image $W=\psi(\tilde{\mathcal{L}})$ is a module of $\mathcal{B}$: obviously, $W$ is closed under multiplication of $\xi$; and for any $f\in\tilde{\mathcal{L}}$, $\bar{\xi}\cdot\psi(f)=\psi(\bar{\Lambda}f)\in\psi(\tilde{\mathcal{L}})$ because $\tilde{\mathcal{L}}$ is closed under left multiplication of $\bar{\Lambda}$. As a result, by \cref{lem: complex module to real module}, $W$ is also a module of the real subalgebra $\mathcal{C} \subset C(K\times I , \mathbb{R})$, where $\mathcal{C}:= \{\Re(f): f\in \mathcal{B}\}$. 

	Next, we examine the equivalent classes of $K\times I$ characterized by $\mathcal{C}$. First, note that $\{\xi+\bar{\xi}, \xi-\bar{\xi}\}$ is another set of generators of $\mathcal{B}$, so the subalgebra $\mathcal{C}=\Re(\mathcal{B})$ is spanned by $\{\Re[(\xi+\bar{\xi})^{k_1}(\xi-\bar{\xi})^{k_2}]: k_1, k_2\in\mathbb{N} \text{ and } k_1+k_2>0\}$. Since the value of $(\xi\pm\bar{\xi})$ is either real or pure imaginary, we can verify easily that $\mathcal{C}=\mathrm{alg}_{\mathbb{R}}\{\mu_1, \mu_2\}$, where
  \[
    \mu_1(\beta,s)=
    \begin{dcases}
      \lambda_{s}(\beta), & \text{if}\ s\in[1:r], \\
		  \alpha_{q}(\beta), & \text{if}\ s\in[r+1:r+2l],
    \end{dcases}
  \]
  and
  \[
    \mu_2(\beta,s)=
    \begin{dcases}
      0, & \text{if}\ s\in[1:r], \\
      \omega_{q}^{2}(\beta), & \text{if}\ s\in[r+1:r+2l],
    \end{dcases}
  \]
  where $q=\lceil\tfrac{s-r}{2}\rceil$. Consequently, if $(\beta_1, s_1) \equiv (\beta_2, s_2) \pmod{\mathcal{C}}$, then one of the following three cases happens.

	\begin{enumerate}[label=(\arabic*), leftmargin= 2em]
		\item Both $s_1, s_2\in [1:r]$. In this case, $\mu_2=0$, and $\mu_1(\beta_1, s_1)=\mu_2(\beta_2, s_2)$ gives that \[\lambda_{s_1}(\beta_1) = \lambda_{s_2}(\beta_2).\]

    \item Both $s_1, s_2\in [r+1:r+2l]$. In this case, $\mu_1(\beta_1, s_1)=\mu_1(\beta_2, s_2)$ implies $\alpha_{q_1}(\beta_1)=\alpha_{q_2}(\beta_2)$, and $\mu_2(\beta_1, s_1)=\mu_2(\beta_2, s_2)$ implies $\omega_{q_1}(\beta_1)=\pm\omega_{q_2}(\beta_2)$. So we have that \[(\alpha_{q_1}+i\omega_{q_1})(\beta_1) = (\alpha_{q_2} \pm i\omega_{q_2})(\beta_2).\]

		\item $s_1\in [1:r]$ and $s_2\in [r+1:r+2l]$. In this case, $\mu_1(\beta_1, s_1)=\mu_1(\beta_2, s_2)$ gives that $\lambda_{s_1}(\beta_1)=\alpha_{q_2}(\beta_2)$, and $\mu_2(\beta_1, s_1)=\mu_2(\beta_2, s_2)$ gives that $\omega_{q_2}(\beta_2)=0$. In other words, we obtain that \[\lambda_{s_1}(\beta_1) = (\alpha_{q_2}\pm i\omega_{q_2})(\beta_2).\]
	\end{enumerate}
	Based on the above analysis, we conclude that the equivalent classes of $K\times I$ modulo $\mathcal{C}$ is determined by the spectra of $A$. Specifically, let us  fix an $\eta\in \mathcal{K}$, and without loss of generality, we assume that $\eta = \lambda_j(\beta)$ for some $j \in [1:r]$ and $\beta\in K$. Then, the equivalent class $[(\beta, j)]_{\mathcal{C}}$ is given by $[(\beta, j)]_{\mathcal{C}} = S_1 \cup S_2 \cup S_3$, where $S_1 = \bigcup_{j=1}^{r} \{(\lambda_j^{-1}(\eta), j)\}$, $S_2 = \bigcup_{j=1}^{l} \{((\alpha_j +i\omega_j)^{-1}(\eta), j)\}$, and $S_3 = \bigcup_{j=1}^{l} \{((\alpha_j - i\omega_j)^{-1}(\eta), j)\}$. 
	
	Now let us consider the linear system in \cref{equ: subsys.general.complex.module.self.adjoint}, and suppose that
  \[
    \frac{\mathrm{d}}{\mathrm{d}t} Y(t,\eta) = \mathcal{D}_{A}(\gamma^{-1}(\eta)) Y(t,\eta) + \mathcal{V}_{B}(\gamma^{-1}(\eta)) u(t) 
  \]
  is controllable on $\mathbb{R}^{n\kappa (\eta)}$, then we have
  \[
    \begin{aligned}
      &\mathrm{span}_{\mathbb{R}}\{ \mathcal{D}_{A}^k (\gamma^{-1}(\eta))\mathcal{V}_{b_j}(\gamma^{-1}(\eta)): j\in [1:m], k\in \mathbb{N}\} \\
      &=\mathrm{span}_{\mathbb{R}}\{\mathcal{V}_{A^k b_j}(\gamma^{-1}(\eta)): j\in [1:m], k\in \mathbb{N}\} = \mathbb{R}^{n\kappa (\eta)}.
    \end{aligned}
  \]
	Hence, by identifying $\mathbb{R}^{n\kappa(\eta)}$ with $C(\gamma^{-1}(\eta),\mathbb{R}^{n})$ through the isomorphism
  \[
    \Theta: \mathbb{R}^{n\kappa (\eta)}\to C(\gamma^{-1}(\eta), \mathbb{R}^n), \quad \Theta(e_{n(j-1)+k})=\mathds{1}_{\beta_j}\hat{e}_k,
  \]
  for $j\in [1: \kappa (\eta)]$ and $k \in [1:n]$, where $\{e_k\}$ and $\{\hat{e}_k\}$ denote the standard basis of $\mathbb{R}^{n\kappa(\eta)}$ and $\mathbb{R}^n$, respectively, and $\mathds{1}_{\beta_j}$ is the characteristic function at $\beta_j\in \gamma^{-1}(\eta)$, it is obvious that
	\[
		\mathcal{L}\vert_{\gamma^{-1}(\eta)} = \mathrm{span}_{\mathbb{R}} \{A^k b_j(\gamma^{-1}(\eta)): j\in [1:m], k\in\mathbb{N}\}
	\]
	is dense in $C(\gamma^{-1}(\eta), \mathbb{R}^n)$.

	Now, given any $f\in W$ and $\epsilon>0$, we have $(P\psi^{-1})f\in \mathcal{L}$. Since $\mathcal{L}\vert_{\gamma^{-1}(\eta)}$ is dense in $C(\gamma^{-1}(\eta), \mathbb{R}^n)$, we can find $g\in C(K, \mathbb{R}^n)$ such that
  \[
    \|(P\psi^{-1})f \vert_{\gamma^{-1}(\eta)} - g\vert_{\gamma^{-1}(\eta)}\|<\epsilon.
  \]
  If we use the map $\pi : K\times I \to K$, $\pi(\beta, j)=\beta$ to denote the projection of $(\beta, j)$ onto the first index, then it is clear that $\pi ([(\beta, j)]_{\mathcal{C}}) = \gamma^{-1}(\eta)$. Furthermore, by the definition of $\gamma^{-1}(\eta)$, there exists a permutation of $[1:\kappa(\eta)]$, denoted as $j_1, \ldots , j_{\kappa(\eta)}$, such that for each $\beta_s\in \gamma^{-1}(\eta)$, if we associate it with $j_s, s = 1, \ldots , \kappa(\eta)$ then we have $[\beta, j]_{\mathcal{C}} = \{(\beta_s, j_s): s\in [1:\kappa(\eta)]\} $. In this way, a 1-to-1 correspondence between $\gamma^{-1}(\eta)$ and $[\beta, j]_{\mathcal{C}}$ is constructed and therefore we have $\|f \vert_{[\beta, j]_{\mathcal{C}}} - (\psi P^{-1})g \vert_{[\beta, j]_{\mathcal{C}}} \| < \|\psi\|\|P^{-1}\|\epsilon$, where $\|P^{-1}\|$ is bounded since $\lambda_j$'s and $(\alpha_j \pm i\omega_j)$'s are all continuous functions over the compact set $K$. Since $\eta$ is arbitrary, the uniform approximation of $f$ can be achieved on all equivalent classes of $K$ characterized by $\mathcal{C}$ so that $f$ is localizable by $\psi P^{-1}(C(K, \mathbb{R}^n))$. Therefore, by Stone-Weierstrass theorem, $W$ is dense in $\psi P^{-1}(C(K, \mathbb{R}^n))$, which concludes that $\mathcal{L} = P(\psi^{-1}(W))$ is dense in $C(K, \mathbb{R}^{n})$.
	
	(Necessity): We observe that the system in \cref{equ: subsys.general.complex.module.self.adjoint} is the collection of subsystems of the ensemble indexed by $\gamma^{-1}(\eta)$. Therefore, it is straightforward that if the ensemble in \cref{equ: diagonal.linear.ensemble.general.eigenvalue} is ensemble controllable on $C(K, \mathbb{R}^{n})$, then the system in \cref{equ: subsys.general.complex.module.self.adjoint} is controllable on $\mathbb{R}^{n \kappa (\eta)}$.
\end{proof}

When all eigenvalues of $A(\cdot )$, i.e., $\lambda_i(\cdot )$, $i = 1, \dots, r$, and $(\alpha_j \pm i\omega_j)(\cdot )$, $j = 1, \dots , l$, are finite-to-one, it holds that $\kappa(\eta) < \infty $ for all $\eta\in \mathcal{K}$ so that the system in \cref{equ: subsys.general.complex.module.self.adjoint} is always a finite-dimensional system, which controllability can be examined through rank-condition. We write $\gamma^{-1}(\eta) = \{\beta_1, \ldots , \beta_{\kappa(\eta)}\}$. Then, we call the matrix $G(\eta)\in \mathbb{R}^{n\kappa(\eta)\times mn\kappa(\eta)}$, defined by
\[
  \begin{aligned}
    G(\eta) &= \bigl[\mathcal{V}_B(\gamma^{-1}(\eta), \mathcal{D}_A (\gamma^{-1}(\eta)) \mathcal{V}_B (\gamma^{-1}(\eta)), \ldots, \mathcal{D}^{n\kappa(\eta)-1}_A (\gamma^{-1}(\eta)) \mathcal{V}_B (\gamma^{-1}(\eta))\bigr] \\
    &=
    \begin{bmatrix}
      B(\beta_1) & A(\beta_1)B(\beta_1), &\ldots & A^{n\kappa(\eta)-1}(\beta_1)B(\beta_1) \\
      B(\beta_2) & A(\beta_2)B(\beta_2), &\ldots & A^{n\kappa(\eta)-1}(\beta_2)B(\beta_2) \\
      \vdots & \ddots  & \ddots & \vdots \\
      B(\beta_{\kappa(\eta)}) & A(\beta_{\kappa(\eta)}) B(\beta_{\kappa(\eta)}) & \ldots & A^{n\kappa(\eta)-1}(\beta_{\kappa(\eta)})B(\beta_{\kappa(\eta)})
    \end{bmatrix}
  \end{aligned}
\]
to be the ensemble controllability matrix associated with the R-ensemble in \cref{equ: diagonal.linear.ensemble.general.eigenvalue}. Therefore, as a result of \cref{thm: UEC.general.complex.module.self.adjoint}, the R-ensemble in \cref{equ: diagonal.linear.ensemble.general.eigenvalue} is uniformly ensemble controllable if and only if its ensemble controllability matrix $G(\eta)$ has full row-rank for all $\eta\in \mathcal{K}$.

Next, we provide three examples to demonstrate the efficacy of \cref{thm: UEC.general.complex.module.self.adjoint}.

\begin{ex}
	Consider the RR-ensemble indexed by $\beta\in K$, given by
  \[
    \begin{aligned}
      \frac{\mathrm{d}}{\mathrm{d}t} X(t, \beta) &=
      \begin{bmatrix}
        \lambda_1(\beta) \\ & \ddots \\ & & \lambda_r(\beta)
      \end{bmatrix}X(t, \beta) + \sum_{j=1}^m b_j(\beta)u_j(t), \\
      &:= A(\beta)X(t, \beta) + B(\beta)u(t),
    \end{aligned}
  \]
	where $\lambda_j :K \to \mathbb{R}$, $j\in [1:r]$ are finite-to-one continuous functions over $K$. Then, its reachable set is given by $\mathcal{L} = \mathrm{span}\:\{ A^k b_j, j\in[1:m], k \in \mathbb{N} \}$. Since in this case, $A^{\ast} = A$, we have $\mathcal{L}$ is closed under $\varphi$, so that \cref{thm: UEC.general.complex.module.self.adjoint} applies. Therefore, the above linear ensemble is uniformly ensemble controllable on $C(K, \mathbb{R}^r)$ if and only if the induced system given by
	\[
		\frac{\mathrm{d}}{\mathrm{d}t} Y(t, \eta) = \mathcal{D}_A (\gamma^{-1}(\eta)) Y(t, \eta) + \mathcal{V}_B (\gamma^{-1}(\eta))u(t) 
	\]
	is controllable on $C(K, \mathbb{R}^{r \kappa (\eta)})$ for all $\eta\in \mathcal{K}$, where $\gamma^{-1}(\eta)$ can be explicitly written as
  \[
    \gamma^{-1}(\eta) = \{ \beta \in K: \lambda_j(\beta) = \eta \text{ for some } j\in [1:r]\}.
  \]
  As proved in \cref{lem: equivalence.to.Li20}, this sufficient and necessary condition is equivalent to the condition proposed in Theorem~3.7 in \cite{li2019separating}. \QEDA
\end{ex}

\begin{ex}
	Consider the RC-ensemble indexed by $\beta\in K$, which eigenvalues are all purely complex, given by
  \[
    \begin{aligned}
      \frac{\mathrm{d}}{\mathrm{d}t} X(t, \beta) &= \begin{bmatrix}
        G_1(\beta) \\ & \ddots \\ & & G_l(\beta)
        \end{bmatrix}X(t, \beta) + \sum_{j=1}^m b_j(\beta)u_j(t),\\
        &:= A(\beta)X(t, \beta) + B(\beta)u(t),
    \end{aligned}
  \]
	where $G_j(\beta) = \begin{bsmallmatrix} 0 & -\omega_j(\beta) \\ \omega_j(\beta) & 0	\end{bsmallmatrix}$ with $\omega_j: K \to \mathbb{R}$, $j = 1, \dots , l$, are finite-to-one continuous functions over $K$. In this case, $A^{\ast} = -A$ so that for any $f\in \mathcal{L}$, we have $A^*f = -Af\in \mathcal{L}$. Hence, \cref{thm: UEC.general.complex.module.self.adjoint} applies. Therefore, the above linear ensemble is uniformly ensemble controllable on $C(K,\mathbb{R}^{2l})$ if and only if the induced system given by
  \[
    \frac{\mathrm{d}}{\mathrm{d}t} Y(t, \eta) = \mathcal{D}_A (\gamma^{-1}(\eta)) Y(t, \eta) + \mathcal{V}_B (\gamma^{-1}(\eta))u(t) 
  \]
  is controllable on $C(K, \mathbb{R}^{2 \kappa (\eta)l})$ for all $\eta\in \mathcal{K}$, where $\gamma^{-1}(\eta)$ can be explicitly written as 
  \[
    \gamma^{-1}(\eta) = \{ \beta \in K: i\omega_j(\beta) = \eta \text{ or } -\eta \text{ for some } j\in [1:l]\}.
  \]
\end{ex}

\begin{ex}
	Consider the RC-ensemble indexed by $\beta\in K$, given by
	\[
		\frac{\mathrm{d}}{\mathrm{d}t} X(t, \beta) =
    \begin{bmatrix}
		  \sqrt{3} & -3 \\ 3 & \sqrt{3}
		\end{bmatrix} \phi(\beta) X(t, \beta) +
    \begin{bmatrix}
	    1 & -\frac{1}{2} & -\frac{1}{2} \\
      0 & -\frac{\sqrt{3}}{2} & \frac{\sqrt{3}}{2}
	  \end{bmatrix} \psi(\beta)u(t),
	\]
  where $\phi(\cdot ), \psi(\cdot ) \in C(K, \mathbb{R})$ with $\phi(\cdot )$ being finite-to-one. Due to its complicated dynamics, it is difficult to use existing approaches to analyze UEC of the above ensemble. We will show that the proposed algebraic framework in this work well-addresses the ensemble controllability of such RC-ensembles.  

  Let $A(\beta) = \begin{bsmallmatrix} \sqrt{3} & -3 \\ 3 & \sqrt{3}	\end{bsmallmatrix} \phi(\beta)$,
  $b_1(\beta) = \begin{bsmallmatrix} 1 \\ 0	\end{bsmallmatrix}\psi(\beta)$, $b_2(\beta) = \begin{bsmallmatrix}	-\frac{1}{2} \\ -\frac{\sqrt{3}}{2}	\end{bsmallmatrix} \psi(\beta)$,
  and $b_3(\beta) = \begin{bsmallmatrix}	-\frac{1}{2} \\ \frac{\sqrt{3}}{2} \end{bsmallmatrix} \psi(\beta)$.
  In this case, we have $A^*(\beta) = \begin{bsmallmatrix}	\sqrt{3} & 3 \\ -3 & \sqrt{3}	\end{bsmallmatrix} \phi(\beta)$. Then, it holds that
	\[
		A^*b_1 = \begin{bmatrix} \sqrt{3} & 3 \\ -3 & \sqrt{3} \end{bmatrix} \begin{bmatrix} 1 \\ 0 \end{bmatrix}\phi\psi
    = \begin{bmatrix} \sqrt{3} \\ -3 \end{bmatrix} \phi\psi
    = \begin{bmatrix} \sqrt{3} & -3 \\ 3 & \sqrt{3} \end{bmatrix} \begin{bmatrix}
		-\frac{1}{2} \\ -\frac{\sqrt{3}}{2}	\end{bmatrix}\phi\psi = Ab_2.
	\]
	Similarly, we can verify that $A^*b_2 = Ab_3$ and $A^*b_3 = Ab_1$ so that the reachable set $\mathcal{L}$ is closed under $\varphi$. Hence, \cref{thm: UEC.general.complex.module.self.adjoint} applies. Therefore, the above linear ensemble is uniformly ensemble controllable on $C(K, \mathbb{R}^2)$ if and only if the induced system given by
  \[
    \frac{\mathrm{d}}{\mathrm{d}t} Y(t, \eta) = \mathcal{D}_A (\gamma^{-1}(\eta)) Y(t, \eta) + \mathcal{V}_B (\gamma^{-1}(\eta))u(t) 
  \]
  is controllable on $C(K, \mathbb{R}^{2 \kappa (\eta)})$ for all $\eta\in \mathcal{K}$, where $\gamma^{-1}(\eta)$ can be explicitly written as
  \[
    \gamma^{-1}(\eta) = \{ \beta \in K: (\sqrt{3} + 3i) \phi(\beta) = \eta \text{ or } (\sqrt{3} - 3i) \phi(\beta) = \eta\}.
  \]
\end{ex}

Before concluding this work, it is worthwhile to mention that \cref{thm: UEC.general.complex.module.self.adjoint} requires the reachable set to be closed under the operation $\varphi$ defined in \cref{equ: UEC.general.complex.module.self.adjoint.varphi} so that the reachable set of the RC-ensemble can be written as a module of self-adjoint algebra. In this case, by checking whether continuous functions are localizable, the denseness of reachable set of a linear ensemble is fully determined by the separating properties of its control vector fields. However, as shown in the following example, when the reachable set cannot be written as a module of self-adjoint algebra, a function may not lie in the closure of the reachable set even if it is localizable by the reachable set. Hence, Stone-Weierstrass-type theorems may not be applicable to ensemble controllability analysis of all R-ensembles due to the nature of RC-ensembles. A sufficient and necessary condition for UEC of arbitrary RC-ensembles is beyond the scope of this work and requires further investigation.

\begin{ex}\label{ex: localizability.not.work.for.all.RC.ensemble}
  Consider the RC-ensemble indexed by $\beta\in K = [0, 1]$, given by
  \begin{equation}
      \frac{\mathrm{d}}{\mathrm{d}t} \begin{bmatrix}
      x_1 (t, \beta) \\ x_2(t, \beta)
      \end{bmatrix} = \begin{bmatrix}
      \cos(2\pi \beta) & -\sin(2\pi \beta) \\ \sin(2\pi \beta) & \cos(2\pi \beta)
      \end{bmatrix} \begin{bmatrix}
      x_1 (t, \beta) \\ x_2(t, \beta)
      \end{bmatrix} + \begin{bmatrix}
      u \\ v
      \end{bmatrix}, \label{equ: ex.non.self.adjoint.1}
  \end{equation}
  where $u, v:[0, T]\to \mathbb{R}$ are piecewise constant control signals. Let $z = x_1+ix_2$. Then, this linear ensemble is uniformly ensemble controllable on $C(K, \mathbb{R}^2)$ if and only if the C-ensemble given by 
  \[
    \frac{\mathrm{d}}{\mathrm{d}t} z(t, \beta) = (\cos(2\pi \beta) +i\sin(2\pi \beta))z(t, \beta) + w(t)= e^{2\pi i\beta} z(t, \beta) + w(t),
  \]
  is uniformly ensemble controllable on $C(K, \mathbb{C})$. We denote $a:K \to \mathbb{C}, a(\beta) = e^{2\pi i\beta}$. Then, the reachable set of the above C-ensemble, given by $\mathcal{A} = \mathrm{span}_{\mathbb{C}}\{a^k, k \in \mathbb{N}\}$ is a subalgebra of $C(K, \mathbb{C})$, and hence, is a complex-valued module of itself. We observe that $0\equiv 1 \pmod{\mathcal{A}}$ and $[\beta]_{\mathcal{A}} = \{\beta\}$ for all $\beta\in (0, 1)$. This is because if $\beta_1, \beta_2\in (0, 1)$ and $\beta_1 \equiv  \beta_2 \pmod{\mathcal{A}}$, then $a^k(\beta_1) = a^k(\beta_2)$ so that $e^{2\pi i k(\beta_1-\beta_2)} = 1$ holds for all $k \in \mathbb{N}$, which implies $\beta_1 = \beta_2$. Now, we consider the function $g:K \to \mathbb{C}$, $g(\beta) = e^{-2\pi i\beta}$. Then, for any $\beta_0\in [0, 1]$, the constant function $h_{[\beta_0]_{\mathcal{A}}}: [\beta_0]_{\mathcal{A}} \to \mathbb{C}$, $x\mapsto e^{-2\pi i\beta_0}$ satisfies $g\vert_{[\beta_0]_{\mathcal{A}}} = h_{[\beta_0]_{\mathcal{A}}}$. Hence, $g$ is localizable by $\mathcal{A}$. However, it is a well-known fact from Fourier analysis that $g\not\in \bar{\mathcal{A}}$. Therefore, we cannot examine UEC of such an RC-ensemble by enumerating all functions that are localizable by its reachable set, as we did in the proof of \cref{thm: UEC.general.complex.module.self.adjoint}. \QEDA
\end{ex}

\section{Conclusion}
\label{sec: conclusion}
In this paper, we have proposed a new algebraic framework to examine UEC of RC-ensembles leveraging Stone-Weierstrass theorem for modules. By checking the rank condition of the ensemble controllability matrix, we provide a sufficient and necessary condition for UEC of a large class of R-ensembles. We have demonstrated the proposed framework is widely applicable to various R-ensembles and well-encompasses existing results; however, it is still subject to some limitations due to the nature of RC-ensembles. A sufficient and necessary condition for UEC of general $n$-dimensional RC-ensembles worths further investigation.

\appendix
\section{~}

\begin{ex}
  \label{ex: motivating example 2}
  Let us consider a two-dimensional RC-ensemble, indexed by a parameter $\beta\in K \subset \mathbb{R}$, given by
  \begin{equation}
    \frac{\mathrm{d}}{\mathrm{d}t} \begin{bmatrix}
    x_1(t, \beta) \\ x_2(t, \beta)
    \end{bmatrix} = \begin{bmatrix}
    \alpha(\beta) & -\omega(\beta) \\ \omega(\beta) & \alpha(\beta)
    \end{bmatrix} \begin{bmatrix}
    x_1(t, \beta) \\ x_2(t, \beta)
    \end{bmatrix} + \begin{bmatrix}
    1 & 0 \\ 0 & \beta
    \end{bmatrix} \begin{bmatrix}
    u \\ v
    \end{bmatrix}, \label{equ: motivating example 2-1}
  \end{equation}
where $x_1(t, \cdot), x_2(t, \cdot), \alpha(\cdot ), \omega(\cdot )\in C(K, \mathbb{R})$ and $u, v :[0, T]\to \mathbb{R}$ are piecewise constant. 

By applying a coordinate transformation $z(t, \beta) = x_1(t, \beta) + i x_2(t, \beta)$, the dynamics of $z(t, \beta)$ yields (with $t$ and $\beta$ omitted for ease of exposition):\
\begin{equation}
  \label{equ: motivating example 2-2}
  \begin{aligned}
    \frac{\mathrm{d}z}{\mathrm{d}t} &= \frac{\mathrm{d}x_1}{\mathrm{d}t}+i \frac{\mathrm{d}x_2}{\mathrm{d}t} = \alpha x_1 - \omega x_2 + i(\omega x_1 + \alpha x_2) + u + i \beta v \\
    &= (\alpha + i \omega) z + u + i \beta v.
  \end{aligned}
\end{equation}
Then the ensemble in \cref{equ: motivating example 2-1} is uniformly ensemble controllable on $C(K, \mathbb{R}^2)$ if and only if the ensemble in \cref{equ: motivating example 2-2} is uniformly ensemble controllable on $C(K, \mathbb{C})$. Hence we only need to analyze the UEC of the complex-valued ensemble system in \cref{equ: motivating example 2-2}. However, due to the parameter variation in the control vector field, the ensemble in \cref{equ: motivating example 2-2} is not equivalent to any C-ensemble. This is because we cannot find a complex-valued vector fields $\eta(\beta)$ such that the two sets $S_1, S_2 \subset C(K, \mathbb{C})$ defined by
\[
  S_1 = \{u+ i\beta v: u, v\in\mathbb{R}\}, \qquad S_2 = \{\eta(\beta)w : w\in \mathbb{C}\}.
\]
are equal. Hence the RC-ensemble in \cref{equ: motivating example 2-1} cannot be turned into a C-ensemble with both vector fields and control signals being complex-valued.\QEDA
\end{ex}

\begin{lemma}
  \label{lem: equivalence.to.Li20}
  Consider the RR-ensemble given by
  \[
    \frac{\mathrm{d}}{\mathrm{d}t} X(t, \beta) =
    \begin{bmatrix}
      \lambda_1(\beta) \\ & \ddots \\ & & \lambda_n(\beta)
    \end{bmatrix}X(t, \beta) +
    \begin{bmatrix}
      b_1^T(\beta) \\ \vdots \\ b_n^T(\beta)
    \end{bmatrix}u(t),
  \]
  with $\lambda_j:K \to \mathbb{R}$, $j\in [1:n]$ are continuous functions. Then, the subsystem indexed by $\gamma^{-1}(\eta)$ given by
  \begin{equation}
    \label{equ: condition.this.work}
    \frac{\mathrm{d}}{\mathrm{d}t} Y(t, \eta) = \mathcal{D}_A(\gamma^{-1}(\eta))Y(t, \eta)p + \mathcal{V}_B(\gamma^{-1}(\eta))u(t),
  \end{equation}
  is controllable on $C(K, \mathbb{R}^{n\kappa (\eta)})$ for all $\eta\in \mathcal{K}$ if and only if the system indexed by $\eta_1, \ldots , \eta_n$ given by
  \begin{equation}
    \label{equ: condition.in.Li20}
    \frac{\mathrm{d}}{\mathrm{d}t} \begin{bmatrix} Z_1(t, \eta_1) \\ \vdots \\ Z_n(t, \eta_n) \end{bmatrix} =
    \begin{bmatrix}
      \eta_{1} I_{\kappa_1(\eta_1)} \\ & \ddots \\ & & \eta_n I_{\kappa_n(\eta_n)}
    \end{bmatrix}
    \begin{bmatrix}
      Z_1(t, \eta_1) \\ \vdots \\ Z_n(t, \eta_n)
    \end{bmatrix} + 
    \begin{bmatrix}
      D_1(\eta_1) \\ \vdots \\ D_n(\eta_n)
    \end{bmatrix},
  \end{equation}
  is ensemble controllable on $C(K, \mathbb{R}^N)$ with $N = \sum_{j=1}^n \kappa_j(\eta_j)$ for all $(\eta_1,\ldots , \eta_n)\in \lambda_1(K)\times \cdots \times \lambda_n(K)$, where $D_j(\eta_j)$ is $\eta_j$-separation matrix of the system
  \[
    \frac{\mathrm{d}}{\mathrm{d}t} z(t, s) = \lambda_j(s) z(t, s) + b_j(s) u(t).
  \]
\end{lemma}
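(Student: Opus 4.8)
The plan is to reduce both controllability conditions to one and the same family of rank conditions on the stacked separation matrices, and then read off the equivalence. The essential observation is that the drift matrices in \cref{equ: condition.this.work} and \cref{equ: condition.in.Li20} are both diagonal: in \cref{equ: condition.in.Li20} each block is a scalar multiple $\eta_j I_{\kappa_j(\eta_j)}$ of the identity, while in \cref{equ: condition.this.work} the matrix $\mathcal{D}_A(\gamma^{-1}(\eta))$ is block-diagonal with diagonal blocks $A(\beta_i)=\diag(\lambda_1(\beta_i),\dots,\lambda_n(\beta_i))$. For a linear system with diagonal drift, the Popov--Belevitch--Hautus (PBH) test shows that controllability is equivalent to the statement that, for every value $\mu$ appearing on the diagonal, the rows of the input matrix sitting in the positions where the diagonal entry equals $\mu$ are linearly independent. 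First I would make this reduction explicit for both systems.

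For \cref{equ: condition.in.Li20}, fix a tuple $(\eta_1,\dots,\eta_n)$ and a value $\mu$; the PBH test at $\mu$ collects the rows of $D_j(\eta_j)$ over those $j$ with $\eta_j=\mu$, i.e.\ the vectors $\{b_j(\beta)^{T}:\eta_j=\mu,\ \beta\in\lambda_j^{-1}(\mu)\}$. The strongest instance of this requirement is obtained, for a fixed $\mu\in\mathcal{K}$, by choosing the tuple with $\eta_j=\mu$ for every $j$ with $\mu\in\lambda_j(K)$ (the remaining $\eta_{j'}$ may be taken arbitrarily in $\lambda_{j'}(K)$, automatically avoiding $\mu$ since then $\lambda_{j'}^{-1}(\mu)=\emptyset$). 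This shows that \cref{equ: condition.in.Li20} holding for all tuples is equivalent to the single family of conditions
\[
  \rank \begin{bmatrix} D_1(\mu) \\ \vdots \\ D_n(\mu) \end{bmatrix} = \sum_{j=1}^{n}\kappa_j(\mu), \qquad \forall\, \mu\in\mathcal{K},
\]
where any block $D_j(\mu)$ with $\mu\notin\lambda_j(K)$ is the empty $0\times m$ matrix. The converse direction uses that for an arbitrary tuple the PBH rows at any $\mu$ form a sub-collection of the rows counted above, and a subset of a linearly independent family is linearly independent.

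Next I would show that \cref{equ: condition.this.work} reduces to exactly the same family. For a fixed $\eta\in\mathcal{K}$, the diagonal entries of $\mathcal{D}_A(\gamma^{-1}(\eta))$ are the numbers $\lambda_j(\beta_i)$ with $\beta_i\in\gamma^{-1}(\eta)$. At the distinguished value $\mu=\eta$, the positions with $\lambda_j(\beta_i)=\eta$ are exactly the pairs $(i,j)$ with $\beta_i\in\lambda_j^{-1}(\eta)$, and since $\lambda_j(\beta)=\eta$ already forces $\beta\in\gamma^{-1}(\eta)$, the associated rows of $\mathcal{V}_B(\gamma^{-1}(\eta))$ are precisely the full stack $\{b_j(\beta)^{T}:\beta\in\lambda_j^{-1}(\eta),\ j\in[1:n]\}$. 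Hence the PBH condition at $\mu=\eta$ is exactly the displayed rank condition at $\mu=\eta$, and letting $\eta$ range over $\mathcal{K}$ recovers the whole family. It then remains to check that the extra PBH values $\mu\neq\eta$ occurring in the $\eta$-subsystem impose nothing new: the rows they involve, $\{b_j(\beta)^{T}:\beta\in\gamma^{-1}(\eta),\ \lambda_j(\beta)=\mu\}$, form a sub-collection of the full stack at $\mu$ (itself in $\mathcal{K}$), so they are automatically independent once the displayed family holds. Combining the two reductions yields \cref{equ: condition.this.work} $\Leftrightarrow$ \cref{equ: condition.in.Li20}.

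I expect the main obstacle to be the bookkeeping in these subset arguments: one must track carefully which parameter values $\beta$ and which eigenvalue channels $j$ contribute input rows at each diagonal value $\mu$ in the two differently organized systems---grouped by total preimage $\gamma^{-1}(\eta)$ in \cref{equ: condition.this.work} versus channel-by-channel in \cref{equ: condition.in.Li20}---and to verify that every ``off-diagonal'' PBH condition is a genuine sub-collection of the common family, so that neither formulation is strictly stronger. The edge cases, namely $\beta$ with $\lambda_j(\beta)=\lambda_{j'}(\beta)$ for $j\neq j'$ and empty preimages $\lambda_j^{-1}(\mu)=\emptyset$, must be handled explicitly but present no real difficulty. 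As an alternative cross-check, one could instead note that \cref{equ: condition.this.work} characterizes UEC of the RR-ensemble through \cref{thm: UEC.general.complex.module.self.adjoint} (as in Example~1), while \cref{equ: condition.in.Li20} characterizes the same UEC through Theorem~4 of \cite{li2019separating}, so that both conditions are equivalent to UEC and hence to each other.
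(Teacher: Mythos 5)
Your proposal is correct, and it reaches the conclusion by a genuinely different (and more uniform) route than the paper. The paper proves the lemma by an explicit two-directional case analysis for $n=2$: for necessity it stacks the $\eta_1$- and $\eta_2$-subsystems of \cref{equ: condition.this.work} and extracts sub-rank conditions, and for sufficiency it argues by contrapositive through an enumeration of the ways controllability of \cref{equ: condition.this.work} can fail (e.g., $\rank\begin{bsmallmatrix} b_1^T(\beta_1) \\ b_1^T(\beta_2)\end{bsmallmatrix}<2$ or $b_2^T(\beta_i)=0$), matching each failure mode to an uncontrollable tuple for \cref{equ: condition.in.Li20}; the extension to $n>2$ and to overlapping spectra is asserted rather than carried out. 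You instead reduce \emph{both} conditions, via the PBH test for diagonal drifts, to one and the same family of full-row-rank conditions on the stacked separation matrices $\bigl[D_1(\mu)^T \cdots D_n(\mu)^T\bigr]^T$ indexed by $\mu\in\mathcal{K}$, using the observation that every PBH condition arising in either formulation is a sub-collection of the rows of this common stack (so subsets of independent families impose nothing new), while the extremal instances recover the full stack. This buys a proof valid for arbitrary $n$ with the multiplicity and spectral-overlap cases ($\lambda_j(\beta)=\lambda_{j'}(\beta)$, $\kappa_j\geq 2$, empty preimages) handled uniformly rather than case by case; it also sidesteps a delicate step in the paper's necessity argument, where controllability of the block-diagonal juxtaposition of two subsystems is asserted from controllability of each separately, which in general requires the disjoint-spectrum bookkeeping your subset argument makes explicit. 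Your closing cross-check (both conditions characterize UEC of the same RR-ensemble, via \cref{thm: UEC.general.complex.module.self.adjoint} and via Theorem 4 of the cited prior work, respectively) is logically sound but would make the lemma circular as support for Example 1, so the direct PBH argument should be the one retained.
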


\begin{proof}
  Without loss of generality, we prove the case when $n = 2$. The proof can be generalized straightforwardly to the cases with $n>2$.

  (Necessity): Suppose $\lambda_1(\beta_1) = \eta_1$ and $\lambda_2(\beta_2) = \eta_2$ with $\eta_1 \neq \eta_2$. Without loss of generality, we assume $\eta_1\not\in K_2\setminus K_1$ and $\eta_2\not \in K_1\setminus K_2$ and $\kappa_1(\eta_1) = \kappa_2(\eta_2) = 1$. If the system in \cref{equ: condition.this.work} is controllable on $C(K, \mathbb{R}^{n\kappa (\eta)})$ for all $\eta\in \mathcal{K}$, then the system
  \[\begin{aligned}
    \frac{\mathrm{d}}{\mathrm{d}t}
    \begin{bmatrix}
      Y_1(t, \eta_1) \\ Y_2 (t, \eta_2)
    \end{bmatrix} &=
    \begin{bmatrix}
      \mathcal{D}_A(\gamma^{-1}(\eta_1)) \\
        & \mathcal{D}_A(\gamma^{-1}(\eta_2))
    \end{bmatrix}
    \begin{bmatrix}
      Y_1(t, \eta_1) \\
      Y_2 (t, \eta_2)
    \end{bmatrix} +
    \begin{bmatrix}
      \mathcal{V}_B(\gamma^{-1}(\eta_1)) \\
      \mathcal{V}_B(\gamma^{-1}(\eta_2))
    \end{bmatrix}u(t) \\
    &=\begin{bmatrix}
      \lambda_1(\beta_1) \\
        & \lambda_2(\beta_1) \\
        & & \lambda_1(\beta_2) \\
        & & & \lambda_2(\beta_2)
    \end{bmatrix}
    \begin{bmatrix}
      Y_1(t, \eta_1) \\ Y_2 (t, \eta_2)
    \end{bmatrix}+
    \begin{bmatrix}
      b^T_1(\beta_1) \\ b^T_2(\beta_1) \\ b^T_1(\beta_2) \\  b^T_2(\beta_2)
    \end{bmatrix},
  \end{aligned}\]
  is controllable on $\mathbb{R}^4$. Then, we have $\mathrm{rank}\{
  \begin{bsmallmatrix}
    \lambda^k_1(\beta_1)b^T_1(\beta_1) \\
    \lambda^k_2(\beta_1) b^T_2(\beta_1) \\
    \lambda^k_1(\beta_2) b^T_1(\beta_2) \\
    \lambda_2^k(\beta_2)  b^T_2(\beta_2)
  \end{bsmallmatrix}, k \in \mathbb{N}\}=4$ so that
  \[
    \mathrm{rank} \{\begin{bmatrix}
      \lambda^k_1(\beta_1)b^T_1(\beta_1) \\ \lambda^k_2(\beta_2) b^T_2(\beta_2) 
    \end{bmatrix}, k \in \mathbb{N} \} = \mathrm{rank} \{\begin{bmatrix}
      \eta_1^kb^T_1(\lambda^{-1}(\eta_1)) \\ \eta_2^k b^T_2(\lambda^{-1}(\eta_2)) 
    \end{bmatrix}, k \in \mathbb{N} \} = 2,
  \]
  which implies that the system 
  \[
    \frac{\mathrm{d}}{\mathrm{d}t}
    \begin{bmatrix}
      z_1(t, \eta_1) \\ z_2(t, \eta_2)
    \end{bmatrix} =
    \begin{bmatrix}
      \eta_1 \\ & \eta_2
    \end{bmatrix}
    \begin{bmatrix}
      z_1(t, \eta_1) \\ z_2(t, \eta_2)
    \end{bmatrix} +
    \begin{bmatrix}
      b^T_1(\lambda^{-1}(\eta_1)) \\ b^T_2(\lambda^{-1}(\eta_2)) 
    \end{bmatrix} u(t)
  \]
  is controllable on $\mathbb{R}^2$. The proof for the cases with $\eta_1 = \eta_2$ or $\kappa_j(\eta_j) \geq 2$, $j = 1,2$ follows the similar procedures above and therefore is omitted.

  (Sufficiency): We observe that given distinct real numbers $a_1, \ldots , a_n$, and non-zero real numbers $c_1, \ldots , c_n$, it always hold that $\rank \{c, Ac, \ldots , A^{n-1}c\} = n$ where $A=
  \begin{bsmallmatrix}
    a_1 \\ & \ddots \\ & & a_n
  \end{bsmallmatrix}$ and
  $c=\begin{bsmallmatrix}
    c_1 \\ \vdots \\ c_n
  \end{bsmallmatrix}$. Leveraging this fact, we argue case by case that if the system in \cref{equ: condition.this.work} is not controllable for some $\eta\in \mathcal{K}$, then the system in \cref{equ: condition.in.Li20} is not controllable for some $(\eta_1, \ldots , \eta_n)\in \lambda_1(K)\times \cdots \times \lambda_n(K)$. 

  Without loss of generality, we consider the case where $\lambda_j$, $j = 1, 2$ has at most $2$ non-injective branches. Let $\gamma \in \mathcal{K}$ with $\gamma^{-1}(\eta) = \{\beta_1, \beta_2\}$. Then, either $\lambda_1(\beta_1) = \lambda_1(\beta_2) = \eta$ or $\lambda_1(\beta_1) = \lambda_2(\beta_2) = \eta$. We further assume that $\lambda_2(\beta_1)\neq \lambda_2(\beta_2) \neq \eta$ since the proof for the case with $\lambda_2(\beta_1) = \eta$ or $\lambda_2(\beta_2) = \eta$ follows the same logic below.
  \begin{enumerate}[label=(\roman*), leftmargin= 2em]
    \item If $\lambda_1(\beta_1) = \lambda_1(\beta_2) = \eta$, then the system in \cref{equ: condition.this.work} boils down to the system
    \[
      \frac{\mathrm{d}}{\mathrm{d}t} X(t,\gamma^{-1}(\eta))=
      \begin{bmatrix}
        \eta \\ & \eta \\& & \lambda_2(\beta_1) \\ & & & \lambda_2(\beta_2)
      \end{bmatrix} X(t,\gamma^{-1}(\eta)) + \begin{bmatrix}
        b^T_1(\beta_1) \\ b^T_1(\beta_2) \\ b^T_2(\beta_1) \\ b^T_2(\beta_2)
          \end{bmatrix} u(t).
    \]
    If such a system is not controllable on $\mathbb{R}^4$, then one of the followings happens
    \begin{enumerate}[label=(\arabic*), leftmargin= 2em]
      \item $\rank\begin{bsmallmatrix} b^T_1(\beta_1) \\ b^T_1(\beta_2) \end{bsmallmatrix}<2$. Then, the system in \cref{equ: condition.in.Li20} indexed by $\eta_1 = \eta$ and arbitrary $\eta_2$ is uncontrollable.
      \item $b_2^T(\beta_1) = 0$. Then, the system in \cref{equ: condition.in.Li20} indexed by $\eta_1=\eta, \eta_2 = \lambda_2(\beta_1)$ is uncontrollable.
      \item $b_2^T(\beta_2) = 0$. Then, the system in \cref{equ: condition.in.Li20} indexed by $\eta_1=\eta, \eta_2 = \lambda_2(\beta_2)$ is uncontrollable.
    \end{enumerate}        
    \item If $\lambda_1(\beta_1) = \lambda_2(\beta_2) = \eta$, then the system in \cref{equ: condition.this.work} boils down to the system
    \[
      \frac{\mathrm{d}}{\mathrm{d}t} X(t, \gamma^{-1}(\eta))=
      \begin{bmatrix}
        \eta \\ & \eta \\& & \lambda_1(\beta_2) \\ & & & \lambda_2(\beta_1)
      \end{bmatrix} X(t, \gamma^{-1}(\eta)) + 
      \begin{bmatrix}
        b^T_1(\beta_1) \\ b^T_2(\beta_2) \\ b^T_1(\beta_2) \\ b^T_2(\beta_1)
      \end{bmatrix} u(t).
    \]
    If such a system is not controllable on $\mathbb{R}^4$, then one of the followings happens
    \begin{enumerate}[label=(\arabic*), leftmargin= 2em]
      \item $\rank \begin{bsmallmatrix} b^T_1(\beta_1) \\ b^T_2(\beta_2) \end{bsmallmatrix}<2$. Then, the system in \cref{equ: condition.in.Li20} indexed by $\eta_1 = \eta_2 = \eta$ is uncontrollable.
      \item $b_1^T(\beta_2) = 0$. Then, the system in \cref{equ: condition.in.Li20} indexed by $\eta_1=\eta$, $\eta_2 = \lambda_1(\beta_2)$ is uncontrollable.
      \item $b_2^T(\beta_1) = 0$. Then, the system in \cref{equ: condition.in.Li20} indexed by $\eta_1=\eta$, $\eta_2 = \lambda_2(\beta_1)$ is uncontrollable.
    \end{enumerate}
  \end{enumerate}
\end{proof}

\section*{Acknowledgments}
  We would like to acknowledge the assistance of volunteers in putting together this example manuscript and supplement.

\bibliographystyle{siamplain}
\bibliography{UEC_complex_eigenvalues}
\end{document}